\documentclass[a4paper]{amsart}

\usepackage{hyperref,geometry,graphicx,amsmath,amssymb,amsfonts,amsthm,array,latexsym}
\usepackage[utf8]{inputenc}

\theoremstyle{plain}
\newtheorem{thm}{Theorem}[section]
\newtheorem{cor}[thm]{Corollary}
\newtheorem{lem}[thm]{Lemma}

\newtheorem{prop}[thm]{Proposition}

\theoremstyle{definition}
\newtheorem{defn}[thm]{Definition}
\newtheorem{rem}[thm]{Remark}
\newtheorem{exmp}[thm]{Example}

\def\leq{\leqslant}
\def\geq{\geqslant}

\def\ZZ{\mathbb{Z}}

\def\PP{\mathbb{P}}

\def\Res{\mathrm{Res}}
\def\Disc{\mathrm{Disc}}

\def\UU{\mathbb{U}}
\def\Mon{\mathrm{Mon}}
\def\Dod{\mathrm{Dod}}
\def\Mt{\mathbb{M}}
\def\Dt{\mathbb{D}}

\title[Resultant of a  $\mathfrak{S}_{n}$-equivariant polynomial system]{Resultant of an equivariant polynomial system with respect to the symmetric group}

\author{Laurent\ Bus\'e} 
\address{Email: laurent.buse@inria.fr, 
INRIA Sophia Antipolis-M\'editeran\'ee, France.} 

\author{Anna\ Karasoulou} 
\address{Email: akarasou@di.uoa.gr, 
Department of Informatics \& Telecommunications,
National and Kapodistrian University of Athens, Greece.}

\begin{document}

\begin{abstract}
	Given a system of $n\geq 2$ homogeneous polynomials in $n$ variables which is equivariant with respect to the canonical actions of the symmetric group of $n$ symbols on the variables and on the polynomials, it is proved that its resultant can be decomposed into a product of several smaller resultants that are given in terms of some divided differences. As an application, we obtain a decomposition formula for the discriminant of a multivariate homogeneous symmetric polynomial. 
\end{abstract}

\maketitle

\section{Introduction}

The analysis and solving of polynomial systems are fundamental problems in computational algebra. In many applications, polynomial systems are highly structured and it is very useful to develop specific methods in order to  take into account a particular structure. In this paper, we will focus  on systems of $n$ homogeneous polynomials $f_{1},\ldots,f_{n}$ in $n$ variables $x_{1},\ldots,x_{n}$ that are globally invariant under the action of the symmetric group $\mathfrak{S}_{n}$ of $n$ symbols. More precisely, we will assume that  for any integer $i \in \{1,2,\ldots,n\}$ and any permutation $\sigma \in \mathfrak{S}_{n}$
\begin{equation*}
\sigma(f_{i}):=f_{i}(x_{\sigma(1)},x_{\sigma(2)},\ldots ,x_{\sigma(n)})=f_{\sigma(i)}(x_{1},x_{2},\ldots,x_{n}).
\end{equation*}
In the language of invariant theory these systems are called equivariant with respect to the symmetric group $\mathfrak{S}_{n}$, or simply $\mathfrak{S}_{n}$-equivariant (see for instance  \cite[\S 4]{Wor94} or \cite[Chapter 1]{DiCa71}). Some recent interesting developments based on Gr\"obner basis techniques for this kind of systems can be found in \cite{FS12} with applications. In this work, we will study the resultant of these systems. 

\medskip

The  main result of this paper (Theorem \ref{thm:maintheorem}) is a decomposition of the resultant of a $\mathfrak{S}_{n}$-equivariant polynomial system. This formula allows to split such a resultant into several other resultants that are in principle easier to compute and that are expressed in terms of the divided differences of the input polynomial system.  We emphasize that the multiplicity of each factor appearing in this decomposition is also given. Another important point of our result is that it is an exact and universal formula which is valid over the universal ring of coefficients (over the integers) of the input polynomial system. Indeed, we payed attention to use a correct and universal definition of the resultant. In this way, the formula we obtain has the correct geometric meaning and stays valid over any coefficient ring by specialization. This kind of property is particularly important for applications in the fields of number theory and arithmetic geometry where the value of the resultant is as important as its vanishing.

\medskip

The discriminant of a homogeneous polynomial is also a fundamental tool in computational algebra. Although the discriminant of the generic homogeneous polynomial of a given degree is irreducible, for a particular class of polynomials it can be decomposed and this decomposition is always deeply connected to the geometric properties of this class of polynomials. The second main contribution of this paper is a decomposition of the discriminant of a homogeneous symmetric polynomial (Theorem \ref{thm:discmaintheorem}). This result was actually the first goal of this work that has been inspired by the unpublished (as far as we know) note \cite{PeSh09} by N.~Perminov and S.~Shakirov where a first tentative for such a formula is given without a complete proof. Another motivation was also to improve the computations of discriminants for applications in convex geometry, following a paper by J.~Nie where the boundary of the cone of non-negative polynomials on an algebraic variety is studied by means of discriminants \cite{Nie12}.  We emphasize that our formula is obtained as a byproduct of our first formula on the resultant of a $\mathfrak{S}_{n}$-equivariant polynomial system. Therefore, it inherits from the same features, namely it allows to split a discriminant into several resultants that are easier to compute and it is a universal formula where the multiplicities of the factors are provided. Here again, we payed attention to use a correct and universal definition of the discriminant. 

\medskip

The paper is organized as follows. In Section \ref{sec:prem} we first provide some preliminaries on some material that we will need, namely multivariate divided differences, resultants and discriminants. Section \ref{sec:resultant} will be devoted to the main result of this paper (Theorem \ref{thm:maintheorem}), that is to say a decomposition formula for the resultant of a polynomial system which is $\mathfrak{S}_{n}$-equivariant. As a corollary of this formula, a decomposition of the discriminant of a homogeneous symmetric polynomial (Theorem \ref{thm:discmaintheorem}) is provided in Section \ref{sec:discriminant}. 

\section{Preliminaries}\label{sec:prem}

In this section we introduce our notation and the material we will use, namely divided differences, resultants and discriminants. We will provide proofs concerning the results on divided differences because we were not able to find the properties we needed  in the literature, although these results are part of the folklore and are definitely known to the experts.  

\subsection{Divided differences}\label{subsec:divdiff}

Let $R$ be a commutative ring and denote by $R[x_{1},\ldots,x_{n}]$ the ring of polynomials in $n\geq 2$ variables which is graded with the usual weights: $\deg(x_{i})=1$ for all $i\in \{1,\ldots,n\}$.
For any sequence of integers $1\leq i_{1}<i_{2}< \cdots < i_{k}\leq n$ we will denote by $V(x_{i_{1}},x_{i_{2}},\ldots,x_{i_{k}})$  the  Vandermonde determinant
$$V(x_{i_{1}},x_{i_{2}},\ldots,x_{i_{k}}):=\prod_{1\leq s<r \leq k} (x_{i_{r}}-x_{i_{s}}) = \det
\left|
\begin{array}{cccc}
 1 & x_{i_{1}} & \cdots & x_{i_{1}}^{{k-1}} \\
 1 & x_{i_{2}} & \cdots & x_{i_{2}}^{{k-1}} \\
 \vdots & \vdots & & \vdots \\
  1 & x_{i_{k}} & \cdots & x_{i_{k}}^{{k-1}} \\
\end{array}
\right|.
$$
It is a homogeneous polynomial in $R[x_{1},\ldots,x_{n}]$ of degree $\binom{k}{2}=\frac{k(k-1)}{2}$. For the sake of simplicity in the notation, for any integer $p$ the set $\{1,2,\ldots,p\}$ will be denoted by $[p]$ and given a finite set $I$, $|I|$ will stand for its cardinality.

\medskip

Suppose given $n$ homogeneous polynomials $P^{ \{1\} }, P^{\{2\}}, \ldots, P^{\{n\}}$ of the same degree $d\geq 1$ in $R[x_{1},\ldots,x_{n}]$ such that for all couple of integers $(i,j) \in [n]^{2}$ the polynomial $P^{\{i\}}-P^{\{j\}}$ is divisible by $x_{i}-x_{j}$:
\begin{equation}\label{eq:PiPj}
P^{\{i\}}-P^{\{j\}} \in (x_{i}-x_{j}) \subset R[x_{1},\ldots,x_{n}].
\end{equation}

\begin{lem}\label{lem:divdiff} For any set of $ k \geq 2$  distinct integers $\{ i_{1},\ldots,i_{k}\} \subset [n]$, there exists a unique homogeneous polynomial $P^{\{ i_{1},\ldots,i_{k}\}}$ in $R[x_{1},\ldots,x_{n}]$ of degree $d-k+1$ such that 
\begin{equation*}
V(x_{i_{1}},x_{i_{2}},\ldots,x_{i_{k}})\cdot P^{\{ i_{1},\ldots,i_{k}\}}(x_{1},\ldots,x_{n})=
\det
\left|
\begin{array}{ccccc}
 1 & x_{i_{1}} & \cdots & x_{i_{1}}^{{k-2}} & P^{\{i_{1}\}}(x_{1},\ldots,x_{n}) \\
 1 & x_{i_{2}} & \cdots & x_{i_{2}}^{{k-2}} & P^{\{i_{2}\}}(x_{1},\ldots,x_{n})\\
 \vdots & \vdots & & \vdots & \vdots \\
  1 & x_{i_{k}} & \cdots & x_{i_{k}}^{{k-2}} & P^{\{i_{k}\}}(x_{1},\ldots,x_{n}) \\
\end{array}
\right|. 
\end{equation*}
\end{lem}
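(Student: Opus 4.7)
The plan is to prove both claims in the lemma by working directly with the determinant on the right-hand side, which I denote $D$. For \emph{uniqueness}, the Vandermonde $V(x_{i_1},\ldots,x_{i_k})$ is a product of linear forms $x_{i_r}-x_{i_s}$ with $s<r$, each of which is monic as a polynomial in $x_{i_r}$ over $R[x_1,\ldots,\widehat{x_{i_r}},\ldots,x_n]$ and therefore a non-zero-divisor in $R[x_1,\ldots,x_n]$. Thus $V$ itself is a non-zero-divisor, so any $P^{\{i_1,\ldots,i_k\}}$ satisfying the stated identity is determined by $D$ and $V$.

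For \emph{existence}, I would first establish that every factor $x_{i_r}-x_{i_s}$ of $V$ individually divides $D$. Reducing modulo the ideal $(x_{i_r}-x_{i_s})$ amounts to identifying $x_{i_r}$ with $x_{i_s}$; under this identification, the first $k-1$ entries of rows $r$ and $s$ of the matrix become identical, and the last entries also coincide because $P^{\{i_r\}}-P^{\{i_s\}}\in(x_{i_r}-x_{i_s})$ by hypothesis \eqref{eq:PiPj}. Two rows coinciding forces $D\equiv 0\bmod(x_{i_r}-x_{i_s})$, as desired.

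The main technical point is to upgrade these $\binom{k}{2}$ individual divisibilities to divisibility by the full product $V$, over a general commutative ring $R$ where unique factorization is not available. I would extract factors from $D$ one at a time, writing $D=\prod_{j=1}^{m}(x_{i_{r_j}}-x_{i_{s_j}})\cdot D_m$ and showing that any remaining pair $(s',r')$ still divides $D_m$. Since $D$ itself is divisible by $x_{i_{r'}}-x_{i_{s'}}$, this reduces to verifying that the already-extracted product is a non-zero-divisor modulo $(x_{i_{r'}}-x_{i_{s'}})$. A brief case analysis shows that after substituting $x_{i_{r'}}\mapsto x_{i_{s'}}$, each previous factor $x_{i_{r_j}}-x_{i_{s_j}}$ is sent to $x_{i_a}-x_{i_b}$ with $a\neq b$, since the pairs $\{s_j,r_j\}$ and $\{s',r'\}$ differ as sets; hence every factor remains a non-zero-divisor, so does their product, and we can cancel it. Iterating exhausts all factors and produces the desired polynomial. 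The degree assertion is then immediate: each summand in the expansion of $D$ has total degree $\sum_{c=0}^{k-2}c+d=\binom{k-1}{2}+d$, so $D$ is homogeneous of that degree, and dividing by the degree-$\binom{k}{2}$ Vandermonde yields $\deg P^{\{i_1,\ldots,i_k\}}=d-k+1$.
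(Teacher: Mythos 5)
Your proof is correct and follows essentially the same route as the paper: divisibility of the determinant by each linear form $x_{i_r}-x_{i_s}$ via row identification, then passage to the full Vandermonde product, with uniqueness from $V$ being a non-zero-divisor (the paper cites the Dedekind--Mertens lemma for this, whereas you use the equivalent elementary observation that $V$ is a product of monic linear forms). The one place you go beyond the paper is the iterated-extraction argument upgrading the $\binom{k}{2}$ individual divisibilities to divisibility by their product over an arbitrary commutative ring --- the paper asserts this with a bare ``hence,'' and your case analysis supplies exactly the justification that step needs.
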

\begin{proof} From the assumption \eqref{eq:PiPj} it is clear that $(x_{i}-x_{j})$ divides the Vandermonde-like determinant
 $$
\left|
\begin{array}{ccccc}
 1 & x_{i_{1}} & \cdots & x_{i_{1}}^{{k-2}} & P^{\{i_{1}\}} \\
 1 & x_{i_{2}} & \cdots & x_{i_{2}}^{{k-2}} & P^{\{i_{2}\}}\\
 \vdots & \vdots & & \vdots & \vdots \\
  1 & x_{i_{k}} & \cdots & x_{i_{k}}^{{k-2}} & P^{\{i_{k}\}} \\
\end{array}
\right|$$
and hence that $V(x_{i_{1}},x_{i_{2}},\ldots,x_{i_{k}})$ also divides it.
Now, $R$ being again an arbitrary commutative ring, the uniqueness of $P^{\{ i_{1},\ldots,i_{k}\}}$ follows from the fact that $V(x_{i_{1}},x_{i_{2}},\ldots,x_{i_{k}})$ is not a zero divisor in $R[x_{1},\ldots,x_{n}]$, which is a consequence of the Dedekind-Mertens Lemma (see for instance \cite[\S 2.4]{BuJo12}). 
\end{proof}

\begin{defn}\label{defdd} For all positive integer $k\leq n$, the polynomials $P^{\{i_{1},\ldots,i_{k}\}}$ defined in Lemma \ref{lem:divdiff} are called $(k-1)^{\textrm{th}}$ divided differences of the polynomials $P^{\{1\}},\ldots,P^{\{n\}}$. We notice that $P^{\{ i_{1},\ldots,i_{k}\}}=0$ if $d+1< k \leq n$. 
 \end{defn}

The first divided differences $P^{\{i,j\}}$ are easily seen to satisfy the equality
\begin{equation}\label{eq:1stdivdiff}
 (x_{i}-x_{j})P^{\{i,j\}}=P^{\{i\}}-P^{\{j\}}.
\end{equation}
This explains the terminology ``divided difference''. It turns out that similar equalities hold for the higher order divided differences.

\begin{prop}\label{prop:divdiff} 
Let $\{ i_{1},\ldots,i_{k} \}$ be a subset of $[n]$ with $k \geq 2$. Then, for any two distinct integers $p,q$ in $\{i_{1},\ldots,i_{k}\}$,
$$(x_{i_{q}}-x_{i_{p}})P^{\{ i_{1},i_{2},\ldots,i_{k} \}}=P^{\{i_{1},i_{2},\ldots,i_{k} \} \setminus \{i_{p}\}  }- P^{\{i_{1},i_{2},\ldots,i_{k} \} \setminus \{i_{q}\}}.$$
\end{prop}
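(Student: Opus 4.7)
The plan is to first derive a partial-fraction expansion for $P^{\{i_1,\ldots,i_k\}}$ directly from the defining determinant, and then to verify the claimed recurrence as a routine rational-function identity. Expanding the $k \times k$ determinant of Lemma~\ref{lem:divdiff} along its last column, the minor attached to the entry $P^{\{i_j\}}$ is the smaller Vandermonde $V(x_{i_1},\ldots,\widehat{x_{i_j}},\ldots,x_{i_k})$; the standard identity
\begin{equation*}
V(x_{i_1},\ldots,x_{i_k}) = (-1)^{k-j}\Bigl(\prod_{l\neq j}(x_{i_j}-x_{i_l})\Bigr)\, V(x_{i_1},\ldots,\widehat{x_{i_j}},\ldots,x_{i_k})
\end{equation*}
together with the cofactor sign $(-1)^{k+j}$ produces, after cancellation of signs,
\begin{equation*}
P^{\{i_1,\ldots,i_k\}} \;=\; \sum_{j=1}^{k} \frac{P^{\{i_j\}}}{\prod_{l\neq j}(x_{i_j}-x_{i_l})}.
\end{equation*}
This equality is to be read in the localization of $R[x_1,\ldots,x_n]$ at the Vandermondes, which is harmless because the latter are non-zero-divisors by the same Dedekind--Mertens argument already invoked in Lemma~\ref{lem:divdiff}.

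Writing $p=i_\alpha$, $q=i_\beta$ with $\alpha\neq\beta$, the next step is to apply this formula to both $P^{\{i_1,\ldots,i_k\}\setminus\{i_\alpha\}}$ and $P^{\{i_1,\ldots,i_k\}\setminus\{i_\beta\}}$ and then split the difference $P^{\{i_1,\ldots,i_k\}\setminus\{i_\alpha\}}-P^{\{i_1,\ldots,i_k\}\setminus\{i_\beta\}}$ according to the summation index. For $j \notin \{\alpha,\beta\}$ both sums contain a $P^{\{i_j\}}$-term; setting $Q_j := \prod_{l\neq j,\alpha,\beta}(x_{i_j}-x_{i_l})$, the one-line partial-fraction computation
\begin{equation*}
\frac{1}{Q_j(x_{i_j}-x_{i_\beta})} - \frac{1}{Q_j(x_{i_j}-x_{i_\alpha})} \;=\; \frac{x_{i_\beta}-x_{i_\alpha}}{\prod_{l\neq j}(x_{i_j}-x_{i_l})}
\end{equation*}
shows that their combined contribution equals $(x_{i_\beta}-x_{i_\alpha})$ times the $j$-th summand in the formula for $P^{\{i_1,\ldots,i_k\}}$. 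For the two ``boundary'' indices $j=\beta$ (present only in $P^{\{i_1,\ldots,i_k\}\setminus\{i_\alpha\}}$) and $j=\alpha$ (present only in $-P^{\{i_1,\ldots,i_k\}\setminus\{i_\beta\}}$), a direct cancellation—the factor $x_{i_\beta}-x_{i_\alpha}$ absorbing the missing $x_{i_\alpha}-x_{i_\beta}$ (resp.\ $x_{i_\beta}-x_{i_\alpha}$) in the denominator—matches precisely the $j=\alpha$ and $j=\beta$ terms of $(x_{i_\beta}-x_{i_\alpha})P^{\{i_1,\ldots,i_k\}}$ with the correct sign.

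Summing the three types of contributions yields the target identity $(x_{i_\beta}-x_{i_\alpha})P^{\{i_1,\ldots,i_k\}} = P^{\{i_1,\ldots,i_k\}\setminus\{i_\alpha\}} - P^{\{i_1,\ldots,i_k\}\setminus\{i_\beta\}}$ in the localization, and since the Vandermondes are non-zero-divisors the equality lifts back to $R[x_1,\ldots,x_n]$, which is the claim. The only obstacle is careful tracking of signs in the Vandermonde cofactor expansion and in the three-way case split between the inner index and the two boundary indices; no conceptual difficulty remains once the partial-fraction formula of the first step is in hand, since everything then reduces to a finite rational-function identity in the variables $x_{i_1},\ldots,x_{i_k}$ alone.
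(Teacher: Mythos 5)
Your proof is correct, but it takes a genuinely different route from the paper. The paper proceeds by induction on $k$: starting from the defining determinant, it subtracts the last row from the others, factors out $\prod_{j<k}(x_{i_j}-x_{i_k})$, performs column operations to recover a Vandermonde-type determinant in the first-order divided differences $P^{\{i_j,i_k\}}$, and iterates until only a $2\times 2$ determinant remains; the inductive hypothesis is used along the way to identify the entries as higher-order divided differences. You instead extract, in one cofactor expansion, the closed Lagrange-type formula
$P^{\{i_1,\ldots,i_k\}}=\sum_{j}P^{\{i_j\}}\bigl/\prod_{l\neq j}(x_{i_j}-x_{i_l})$
and then check the recurrence as a finite partial-fraction identity; I verified your sign bookkeeping (the cofactor sign $(-1)^{j+k}$ cancels exactly against the $(-1)^{k-j}$ from the Vandermonde factorization, and the three-way case split works out as you describe). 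Your argument is non-inductive and arguably more transparent, and it makes visible the classical characterization of divided differences as weighted sums of the $P^{\{i_j\}}$; its only cost is that the computation lives in the localization of $R[x_1,\ldots,x_n]$ at the multiplicative set generated by the differences $x_i-x_j$, so you must (and do) note that this localization map is injective because those differences are non-zero-divisors by the Dedekind--Mertens argument already used in Lemma~\ref{lem:divdiff}. A side benefit of your route is that the paper's opening reduction to the universal coefficient ring becomes unnecessary: everything works verbatim over an arbitrary commutative ring $R$.
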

\begin{proof} We observe that it is enough to prove this result over the universal ring of coefficients of $P^{ \{1\} }, P^{\{2\}}, \ldots, P^{\{n\}}$ over the integers and we proceed by induction on $k$. As we already noticed in \eqref{eq:1stdivdiff}, the claimed formula holds for $k=2$. So, we fix an integer $k>2$ and we assume that the claimed formula holds for any set $\{i_{1},\ldots,i_{r}\}$ of cardinality $\leq k-1$.  Observe also that since $P^{ \{ i_{1},i_{2},\ldots,i_{k} \}}$ is independent of the order of $i_{1},i_{2},\ldots,i_{k}$,  it is sufficient to prove the claimed equality for $\{p,q\}=\{1,2\}$.  

By definition (see Lemma \ref{lem:divdiff}), we have
$$
V(x_{i_{1}},\ldots,x_{i_{k}}) P^{\{i_{1},\ldots,i_{k} \}} =
 \left|
\begin{array}{ccccc}
 1 & x_{i_{1}} & \cdots & x_{i_{1}}^{{k-2}} & P^{\{i_{1}\}} \\
 1 & x_{i_{2}} & \cdots & x_{i_{2}}^{{k-2}} & P^{\{i_{2}\}}\\
 \vdots & \vdots & & \vdots & \vdots \\
  1 & x_{i_{k}} & \cdots & x_{i_{k}}^{{k-2}} & P^{\{i_{k}\}} \\
\end{array}
\right|.
$$
We denote by $\Delta$ this determinant. 
By subtracting the last row from all the other rows in the matrix of $\Delta$, we get
$$
\Delta=
 \left|
\begin{array}{ccccc}
 0 & x_{i_{1}} - x_{i_{k}} & \cdots & x_{i_{1}}^{{k-2}} -x_{i_{k}}^{{k-2}} & P^{\{i_{1}\}} - P^{\{i_{k}\}} \\
 0 & x_{i_{2}} - x_{i_{k}} & \cdots & x_{i_{2}}^{{k-2}} -x_{i_{k}}^{{k-2}} & P^{\{i_{2}\}} - P^{\{i_{k}\}} \\
 \vdots & \vdots & & \vdots & \vdots \\
  0 & x_{i_{k-1}} - x_{i_{k}} & \cdots & x_{i_{k-1}}^{k-2}-x_{i_{k}}^{{k-2}} & P^{\{i_{k-1}\}} - P^{\{i_{k}\}} \\
  1 & x_{i_{k}} & \cdots & x_{i_{k}}^{{k-2}} & P^{\{i_{k}\}} \\
\end{array}
\right| =  \left(  \prod_{j=1}^{k-1} (x_{i_{j}}-x_{i_{k}}) \right) \tilde{\Delta}
$$
where (we use \eqref{eq:1stdivdiff})
\begin{align*}
  \tilde{\Delta} &=
\left|
\begin{array}{ccccccc}
 0 & 1 & x_{i_{1}} + x_{i_{k}} &  \sum_{r=0}^{2} x_{i_{1}}^{r}x_{i_{k}}^{2-r} &  \cdots & \sum_{r=0}^{k-3} x_{i_{1}}^{r}x_{i_{k}}^{k-3-r} & P^{\{i_{1},i_{k}\}}  \\
 0 & 1 & x_{i_{2}} + x_{i_{k}} &  \sum_{r=0}^{2} x_{i_{2}}^{r}x_{i_{k}}^{2-r} &  \cdots & \sum_{r=0}^{k-3} x_{i_{2}}^{r}x_{i_{k}}^{k-3-r} & P^{\{i_{2},i_{k}\}} \\
 \vdots &  \vdots & \vdots & & \vdots & \vdots \\
 0 & 1 & x_{i_{k-1}} + x_{i_{k}} &  \sum_{r=0}^{2} x_{i_{k-1}}^{r}x_{i_{k}}^{2-r} &  \cdots & \sum_{r=0}^{k-3} x_{i_{k-1}}^{r}x_{i_{k}}^{k-3-r} & P^{\{i_{k-1},i_{k}\}} \\
  1 & x_{i_{k}} & x_{i_{k}}^{2} & x_{i_{k}}^{3} & \cdots & x_{i_{k}}^{{k-2}} & P^{\{i_{k}\}} \\
\end{array}
\right| \\
&= (-1)^{k-1}
\left|
\begin{array}{cccccc}
 1 & x_{i_{1}} + x_{i_{k}} &  \sum_{r=0}^{2} x_{i_{1}}^{r}x_{i_{k}}^{2-r} &  \cdots & \sum_{r=0}^{k-3} x_{i_{1}}^{r}x_{i_{k}}^{k-3-r} & P^{\{i_{1},i_{k}\}}  \\
 1 & x_{i_{2}} + x_{i_{k}} &  \sum_{r=0}^{2} x_{i_{2}}^{r}x_{i_{k}}^{2-r} &  \cdots & \sum_{r=0}^{k-3} x_{i_{2}}^{r}x_{i_{k}}^{k-3-r} & P^{\{i_{2},i_{k}\}} \\
  \vdots & \vdots & & \vdots & \vdots \\
  1 & x_{i_{k-1}} + x_{i_{k}} &  \sum_{r=0}^{2} x_{i_{k-1}}^{r}x_{i_{k}}^{2-r} &  \cdots & \sum_{r=0}^{k-3} x_{i_{k-1}}^{r}x_{i_{k}}^{k-3-r} & P^{\{i_{k-1},i_{k}\}} \\
\end{array}
\right|.
\end{align*}
By multiplying the column $j-1$ by $x_{i_{k}}$ and subtracting the result to the column $j$ in the above matrix, for $j=k-2$ down to $2$, we deduce that 
$$
\tilde{\Delta} =
\left|
\begin{array}{ccccc}
 1 & x_{i_{1}} & \cdots & x_{i_{1}}^{{k-3}} & P^{\{i_{1},i_{k}\}} \\
 1 & x_{i_{2}} & \cdots & x_{i_{2}}^{{k-3}} & P^{\{i_{2},i_{k}\}}\\
 \vdots & \vdots & & \vdots & \vdots \\
  1 & x_{i_{k-1}} & \cdots & x_{i_{k-1}}^{{k-3}} & P^{\{i_{k-1}, i_{k}\}} \\
\end{array}
\right|.
$$
Finally, we obtain
$$
V(x_{i_{1}},\ldots,x_{i_{k}}) P^{\{i_{1},\ldots,i_{k} \}} = (x_{i_{k}}-x_{i_{1}})\cdots(x_{i_{k}}-x_{i_{k-1}})
\left|
\begin{array}{ccccc}
 1 & x_{i_{1}} & \cdots & x_{i_{1}}^{{k-3}} & P^{\{i_{1},i_{k}\}} \\
 1 & x_{i_{2}} & \cdots & x_{i_{2}}^{{k-3}} & P^{\{i_{2},i_{k}\}}\\
 \vdots & \vdots & & \vdots & \vdots \\
  1 & x_{i_{k-1}} & \cdots & x_{i_{k-1}}^{{k-3}} & P^{\{i_{k-1}, i_{k}\}} \\
\end{array}
\right|
$$
and since $\prod_{j=1}^{k-1}(x_{i_{k}}-x_{i_{j}})$ is not a zero divisor in $R[x_{1},\ldots,x_{n}]$ (by Dedekind-Mertens Lemma), it follows that 
$$
V(x_{i_{1}},\ldots,x_{i_{k-1}}) P^{\{i_{1},\ldots,i_{k} \}} = 
\left|
\begin{array}{ccccc}
 1 & x_{i_{1}} & \cdots & x_{i_{1}}^{{k-3}} & P^{\{i_{1},i_{k}\}} \\
 1 & x_{i_{2}} & \cdots & x_{i_{2}}^{{k-3}} & P^{\{i_{2},i_{k}\}}\\
 \vdots & \vdots & & \vdots & \vdots \\
  1 & x_{i_{k-1}} & \cdots & x_{i_{k-1}}^{{k-3}} & P^{\{i_{k-1}, i_{k}\}} \\
\end{array}
\right|.
$$
By repeating this process and using our inductive hypothesis (here on sets of cardinality $3$), we get
$$
V(x_{i_{1}},\ldots,x_{i_{k-2}}) P^{\{i_{1},\ldots,i_{k} \}} = 
\left|
\begin{array}{ccccc}
 1 & x_{i_{1}} & \cdots & x_{i_{1}}^{{k-4}} & P^{\{i_{1},i_{{k-1}},i_{k}\}} \\
 1 & x_{i_{2}} & \cdots & x_{i_{2}}^{{k-4}} & P^{\{i_{2},i_{{k-1}},i_{k}\}}\\
 \vdots & \vdots & & \vdots & \vdots \\
  1 & x_{i_{k-2}} & \cdots & x_{i_{k-1}}^{{k-4}} & P^{\{i_{k-2},i_{{k-1}}, i_{k}\}} \\
\end{array}
\right|.
$$
Continuing this way, we end with the equality
$$ (x_{i_{2}}-x_{i_{1}})  P^{\{i_{1},\ldots,i_{k} \}} = 
V(x_{i_{1}},x_{i_{2}})P^{\{i_{1},\ldots,i_{k} \}} = 
\left|
\begin{array}{cc}
 1 & P^{\{i_{1},i_{3},\ldots,i_{k} \}} \\
 1 & P^{\{i_{2},i_{3},\ldots,i_{k} \}} 
\end{array}
\right|= P^{\{i_{2},i_{3},\ldots,i_{k} \}} -P^{\{i_{1},i_{3},\ldots,i_{k} \}}
$$
which concludes the proof.
\end{proof}

\begin{rem}\label{rem:deg0}
If $n\geq d+1$ then the $d^{\mathrm{th}}$ divided differences are elements in $R$ because there are homogeneous polynomials in $R[x_{1},\ldots,x_{n}]$ of degree $0$. Then, the previous proposition shows that they are all equal : $P^{I}=P^{J}$ for all subsets $I$ and $J$ of $[n]$ such that $|I|=|J|=d+1\leq n$.
\end{rem}
 
\begin{exmp}\label{ex:linform} The more general system of three linear homogeneous polynomials in 3 variables satisfying \eqref{eq:PiPj} is of the form 
$$\left\{
\begin{array}{ccc}
P^{\{1\}} & = & (a+d)x_{1} + bx_{2} + cx_{3} \\
P^{\{2\}} & = & ax_{1} + (b+d) x_{2} + cx_{3}\\
P^{\{3\}} & =  & ax_{1} + bx_{2} + (c+d)x_{3}.
\end{array}
\right.
$$
Some straightforward computations show that $P^{\{1,2\}}=P^{\{1,3\}}=P^{\{2,3\}}=d$ and $P^{\{1,2,3\}}=0$. 
\end{exmp}

The following result is another consequence of Proposition \ref{prop:divdiff} that we record for later use.

\begin{cor}\label{cor:ideal} Let $I$ and $J$ be two subsets of $[n]$ of the same cardinality $r$ with $1\leq r \leq n-1$. Then, the polynomial $P^{I}-P^{J}$ belongs to the ideal of polynomials generated by the $(r+1)^{\mathrm{th}}$ divided differences, that is to say 
$$ P^{I} - P^{J} \in (\ldots, P^{K},\ldots )_{K\subset [n], |K| = r+1}.$$
\end{cor}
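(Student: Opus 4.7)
The plan is to reduce to the case where $I$ and $J$ differ in a single element by a telescoping argument, and to handle that adjacent case directly with Proposition \ref{prop:divdiff}.

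First I would treat the adjacent case $|I \cap J| = r-1$. Set $K = I \cup J$, which has cardinality $r+1$; the hypothesis $r \leq n-1$ guarantees $r+1 \leq n$, so $P^K$ is a well-defined $(r+1)^{\mathrm{th}}$ divided difference. Writing $K \setminus J = \{p\}$ and $K \setminus I = \{q\}$, Proposition \ref{prop:divdiff} applied to $K$ with these two distinguished indices yields
$$(x_{q}-x_{p}) P^{K} = P^{K\setminus\{p\}} - P^{K\setminus\{q\}} = P^{J} - P^{I},$$
so $P^{I}-P^{J} = -(x_{q}-x_{p})P^{K}$ already lies in the ideal generated by the $(r+1)^{\mathrm{th}}$ divided differences.

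For arbitrary $I$ and $J$ of cardinality $r$, enumerate $I\setminus J=\{a_{1},\ldots,a_{s}\}$ and $J\setminus I=\{b_{1},\ldots,b_{s}\}$ (same cardinality since $|I|=|J|$), and build a chain of $r$-subsets $I=I_{0},I_{1},\ldots,I_{s}=J$ by the single swap $I_{t}=(I_{t-1}\setminus\{a_{t}\})\cup\{b_{t}\}$. Each consecutive pair $(I_{t-1},I_{t})$ falls under the adjacent case above, so each $P^{I_{t-1}}-P^{I_{t}}$ lies in the target ideal, and telescoping
$$P^{I}-P^{J} = \sum_{t=1}^{s}\bigl(P^{I_{t-1}}-P^{I_{t}}\bigr)$$
concludes the proof. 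I do not expect a genuine obstacle here: the only point requiring care is that every intermediate set $I_{t}$ has size $r$ and every union $I_{t-1}\cup I_{t}$ has size $r+1\leq n$, both of which are immediate from the construction and the hypothesis $r\leq n-1$.
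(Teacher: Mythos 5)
Your proof is correct and follows essentially the same route as the paper: handle the case $|I\cap J|=r-1$ via Proposition \ref{prop:divdiff} applied to $K=I\cup J$, then reduce the general case to it by swapping one element of $I\setminus J$ for one of $J\setminus I$ at a time and telescoping. Your explicit chain $I_0,\ldots,I_s$ is just a cleaner packaging of the paper's iterative replacement argument.
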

\begin{proof} If $|I\cap J|=r-1$ then $P^{I}-P^{J}$ is a multiple of a divided difference $P^{K}$ with $|K| = r+1$ by Proposition \ref{prop:divdiff}. Otherwise, $r\geq 2$, $|I\cap J|<r-1$ and hence there exist $j\in J\setminus I$ and $i\in I\setminus J$ (observe that $i\neq j$ necessarily). Now,
$$ P^{I}-P^{J}=P^{I} - P^{(I\setminus \{i\})\cup \{j\}} + P^{(I\setminus \{i\})\cup \{j\}}- P^{J}$$
where the term $P^{I} - P^{(I\setminus \{i\})\cup \{j\}}$ is a multiple of a divided difference $P^{K}$ with $|K| = r+1$ since 
$|I\cap \left((I\setminus \{i\})\cup \{j\})\right)|=r-1$. So, to prove that $P^{I}-P^{J}$ belongs to the ideal generated by the ${(r+1)}^{\mathrm{th}}$ divided differences amounts to prove that $P^{(I\setminus \{i\})\cup \{j\}}- P^{J}$ belongs to this ideal. But notice that
$|J\cap \left( (I\setminus \{i\})\cup \{j\} \right)|=|I\cap J|+1$. Therefore, one can repeat this operation to reach a cardinality of $r-1$ and from there the conclusion follows. 
\end{proof}

\subsection{Resultant of homogeneous polynomials}\label{subsec:resultant}

Suppose given an integer $n\geq 1$ and a sequence of positive integers $d_1,\ldots,d_{n}$. We consider the \emph{generic}  homogeneous polynomials in the variables $x=(x_1,\ldots,x_n)$ (all assumed to have weight 1) and of degree $d_1,\ldots,d_{n}$ respectively. They are of the form
$$f_i(x_1,\ldots,x_n)=\sum_{|\alpha|=d_i}u_{i,\alpha}x^\alpha, \ \ 
i=1,\ldots,n.$$
The ring ${\UU}:=\ZZ[u_{i,\alpha}: i=1,\ldots,n, |\alpha|=d_i]$ is called the universal ring of coefficients. The polynomials $f_1,\ldots,f_n$ belong to the ring ${C}:={\UU}[x_1,\ldots,x_n]$. Following \cite{Jou91}, the {\it ideal of inertia forms} of these polynomials, i.e.~the ideal $(f_{1},\ldots,f_{n}):(x_{1},\ldots,x_{n})^{\infty}$, is canonically graded and its degree zero part is a
principal ideal of $\UU$. The universal resultant, denoted $\Res$, is then define as the unique generator of this principal ideal such that  
  \begin{equation}\label{eq:normres}
 \Res(x_1^{d_1},\ldots,x_n^{d_n})=1.
 \end{equation}
To define the resultant of any given $n$-uples of homogeneous polynomials in the variables $x_1$,\ldots, $x_n$ (and also to clarify \eqref{eq:normres}) one proceeds as follows. 
Let $S$ be a commutative ring and for all $i=1,\ldots,n$ suppose given a homogeneous polynomial of degree $d_{i}$
    $$g_i=\sum_{|\alpha|=d_i}v_{i,\alpha}x^\alpha \in
    S[x_1,\ldots,x_n]_{d_i}.$$
Then, the resultant of $g_{1},\ldots,g_{n}$ is defined as the image of the universal resultant by the specialization ring morphism  $\theta:{\UU}\rightarrow S:   u_{j,\alpha} \mapsto v_{j,\alpha}$, that is to say 
$$\Res(g_1,\ldots,g_n):=\theta(\Res) \in S.$$
Observe that if $S=\UU$ and $\theta$ is the identity, then the universal resultant $\Res$ is nothing but $\Res(f_{1},\ldots,f_{n})$, which is the notation we will use. If $S$ is a field, then the resultant has the expected geometric interpretation : it vanishes if and only if the polynomials $g_{1},\ldots,g_{n}$ have a common root in the projective space $\PP^{{n-1}}_{\overline{S}}$ (where $\overline{S}$ stands for the algebraic closure of $S$).

\medskip

We now recall briefly some properties of the resultant that we will use in the sequel. For the proofs, we refer the reader to \cite[\S5]{Jou91} (see also  (\cite{Jou97,GKZ94,CLO05}). Let $S$ be any commutative ring and suppose given $g_1,\ldots,g_n$ homogeneous polynomials in the polynomial ring $S[x_1,x_2,\ldots,x_n]$ of positive degree $d_1,\ldots,d_n$ respectively.

\medskip

\paragraph{\emph{Homogeneity}:} for all $i=1,\ldots,n$, $\Res(f_1,\ldots,f_n)$ is homogeneous with respect to the coefficients $(u_{i,\alpha})_{|\alpha|=d_{i}}$ of $f_i$ of degree $d_1\ldots d_n/d_i$.

\medskip

\paragraph{\emph{Permutation of polynomials}:} $\Res(g_{\sigma(1)},\ldots,g_{\sigma(n)})=(\mathcal{E}(\sigma))^{d_1\ldots d_n}\Res(g_1,\ldots,g_n)$ for any permutation $\sigma$ of the set $\{1,\ldots,n\}$ ($\mathcal{E}(\sigma)$ denotes the signature of the permutation $\sigma$).

\medskip

\paragraph{\emph{Elementary transformations}:} $\Res(g_1,\ldots,g_i+\sum_{i\neq j}h_jg_j,\ldots,g_n)=\Res(g_1,\ldots,g_n)$ for any homogeneous polynomials $h_{j}$ of degree $d_{i}-d_{j}$.

\medskip

\paragraph{\emph{Multiplicativity}:} $\Res(g_1'g_1'',g_2,\ldots,g_n)=\Res(g_1',g_2,\ldots,g_n)\Res(g_1'',g_2,\ldots,g_n)$ for any pair of homogeneous polynomials $g_{1}'$ and $g_{1}''$.

\medskip

\paragraph{\emph{Linear change of variables}:} Let $\phi$ be a $n\times n$-matrix with entries in $S$ and denote by $\phi(x)$ the product of the matrix $\phi$ with the column vector $(x_{1}, \ldots,x_{n})^{t}$. Then
$$\Res(g_{1}(\phi(x)),g_{2}(\phi(x)),\ldots,g_{n}(\phi(x)))=\det(\phi)^{d_{1}\cdots d_{n}}\Res(g_{1},\ldots,g_{n}).$$
In particular, the resultant is invariant, up to sign, under permutation of the variables $x_{1},\ldots,x_{n}$.

\medskip

Finally, let us recall quickly the famous \emph{Macaulay formula} that goes back to the work of Macaulay \cite{Mac02} and that is still nowadays a very powerful tool to compute exactly the resultant over a general coefficient ring (all the examples presented in this paper have been computed with this formula). 

Assume we are in the generic setting over the ring $\UU$. Set $\delta:=\sum_{i=1}^{n}(d_{i}-1)$ and denote by $\Mon(n;t)$ the set of all homogeneous monomials of degree $t$ in the $n$ variables $x_{1},\ldots,x_{n}$. if $t\geq \delta+1$ then for any $x^{\alpha} \in \Mon(n;t)$ there exists $i\in \{1,\ldots,n\}$ such that $x_{i}^{d_{i}}$ divides the monomial $x_{\alpha}$. Therefore, in this case we set $i(\alpha):=\min \{ i : x_{i}^{d_{i}} | x^{\alpha} \}$ and we define the square matrix 
$$ \Mt(f_{1},\ldots,f_{n};t)=(m_{\alpha,\beta}) : \Mon(n;t)\times \Mon(n;t) \rightarrow \UU$$
by the formula
$$ \frac{x^{\beta}}{x_{i(\beta)}^{d_{i(\beta)}}}f_{i(\beta)}=\sum_{{|\alpha|=t}}m_{\alpha,\beta}x^{\alpha} \ \textrm{for all } x^{\beta} \in \Mon(n;t).$$
Now, define 
$$\Dod(n;t):=\{ x^{\alpha}\in \Mon(n;t) \textrm{ such that } \exists i\neq j \ : \  x_{i}^{d_{i}}x_{j}^{d_{j}} | x^{\alpha} \}\subset \Mon(n;t)$$ and denote by $\Dt(f_{1},\ldots,f_{n};t)$ the square submatrix of $\Mt(f_{1},\ldots,f_{n};t)$ which is indexed by $\Dod(n;t)$. Now, for any $t\geq \delta +1$ we have the Macaulay formula :
$$ \det(\Mt(f_{1},\ldots,f_{n};t))=\Res(f_{1},\ldots,f_{n})\det(\Dt(f_{1},\ldots,f_{n};t)).$$

\subsection{Discriminant}\label{subsec:disc} Consider the \emph{generic} homogeneous polynomial of degree $d\geq 2$ in $n\geq 2$ variables
$$f(x_1,\ldots,x_n)=\sum_{|\alpha|=d}u_{\alpha}x^\alpha.$$
We denote its universal ring of coefficients ${\UU}:=\ZZ[u_{\alpha}: |\alpha|=d]$, so that $f\in {\UU}[x_1,\ldots,x_n]$. The universal discriminant of $f$, denoted $\Disc(f)$, is defined as the unique element in $\UU$ that satisfies the equality
$$ d^{a(n,d)}\Disc(f)=\Res\left(\frac{\partial f}{\partial x_{1}},\frac{\partial f}{\partial x_{2}},\ldots,\frac{\partial f}{\partial x_{n}} \right)$$
where 
$$a(n,d):=\frac{(d-1)^{n}-(-1)^{n}}{d} \in \ZZ.$$
Similarly to what we have done for the resultant, given a commutative ring $S$ and an homogeneous polynomial of degree $d$
$$g=\sum_{|\alpha|=d} u_{\alpha}x^\alpha \in S[x_{1},\ldots,x_{n}]_{d},$$
its discriminant is denoted by $\Disc(g)$ and is defined as the image of the universal discriminant $\Disc(f)$ by the canonical specialization $\theta:\UU\rightarrow S : u_{\alpha} \mapsto u_{\alpha}$, that is to say
$$\Disc(g)=\theta(\Disc(f)) \in S.$$
With this definition we get a smoothness criterion : If $S$ is an algebraically closed field and $g\neq 0$, then $\Disc(g)=0$ if and only if the hypersurface defined by the polynomial $g$ in 
$\mathrm{Proj}(S[x_{1},\ldots,x_{n}])$ is singular. For a detailed study of the discriminant and its  numerous properties, mostly inherited from the ones of the resultant, we refer the reader to \cite{BuJo12,Dem12,GKZ94} and the references therein. We only point out for future use that the following  property :  the universal discriminant is homogeneous with respect to the coefficient of $f$ of degree $n(d-1)^{n-1}$.
 
\section{Resultant of a $\mathfrak{S}_{n}$-equivariant polynomial system}\label{sec:resultant}

In this section, we consider a polynomial system of $n$ homogeneous equations $F^{\{1\}},\ldots,F^{\{n\}}$ 
in $R[x_{1},\ldots,x_{n}]$, $R$ being an arbitrary commutative ring, of the same degree $d\geq 1$, which is equivariant (see for instance  \cite[\S 4]{Wor94} or \cite[Chapter 1]{DiCa71}) with respect to the canonical actions of the symmetric group $\mathfrak{S}_{n}$ on the variables and polynomials.  
More precisely, we assume that for any integer $i \in \{1,2,\ldots,n\}$ and any permutation $\sigma \in \mathfrak{S}_{n}$
\begin{equation}\label{eq:globsym}
\sigma(F^{\{i\}}):=F^{\{i\}}(x_{\sigma(1)},x_{\sigma(2)},\ldots ,x_{\sigma(n)})=F^{\{\sigma(i)\}}(x_{1},x_{2},\ldots,x_{n}).
\end{equation}
The two following examples suggest that this assumption imposes a decomposition into products of the resultant of  $F^{\{1\}},\ldots,F^{\{n\}}$.

\begin{exmp}\label{eq:jpj} In the case $n=2$ and $d\geq 1$ the polynomial system
$$F^{\{1\}}(x,y):=a_{0}x_{1}^{d}+a_{1}x_{1}^{d-1}x_{2}+\cdots+a_{d}x_{2}^{d}, \ \ F^{\{2\}}(x,y):=F^{\{1\}}(y,x)$$
over the coefficient ring $\ZZ[a_{0},\ldots,a_{d}]$ is the universal  $\mathfrak{S}_{2}$-equivariant polynomial system (any other equivariant system of degree $d$, and with $n=2$, can be obtained as a specialization of this system). One can show (see for instance \cite[Exercice 67]{ApJo}) that there exists an irreducible polynomial $K_{d} \in \ZZ[a_{0},\ldots,a_{d}]$ such that
$$\Res\left( F^{\{1\}},F^{\{2\}} \right)=F^{\{1\}}(1,1)F^{\{1\}}(1,-1)K_{d}^{2}=\left( \sum_{i=0}^{d}a_{i} \right)\left( \sum_{i=0}^{d} (-1)^{i} a_{i} \right)K_{d}^{2}.$$
\end{exmp}

\begin{exmp}\label{ex:d=1} Suppose $n\geq 2$, $d=1$ and $F^{\{i\}}(x_{1},\ldots,x_{n})=ax_{i}+b e_{1}(x_{1},\ldots,x_{n})$, $i=1,\ldots,n$. It is clear that these polynomials satisfy \eqref{eq:globsym}. Moreover, since the resultant of $n$ linear forms in $n$ variables is the determinant of the matrix of their associated linear system, a straightforward computation shows that $$\Res\left(F^{\{1\}},\ldots,F^{\{n\}}\right)=  a^{{n-1}}(a+nb).$$
\end{exmp}

The goal of this section is to prove a general decomposition formula (Theorem \ref{thm:maintheorem}) for the resultant of a  $\mathfrak{S}_{n}$-equivariant homogeneous polynomial system $F^{\{1\}},\ldots,F^{\{n\}}$. We begin this section with some observations on the specialization of divided differences with respect to a given partition of the variables.

\subsection{Divided differences and partitions} \label{subsec:divdiff+part}

A finite sequence $\lambda=(\lambda_{1},\ldots,\lambda_{k})$ of weakly decreasing integers,  i.e.~such that $\lambda_{1} \geq \cdots \geq \lambda_{k}\geq 0$, is called a partition. 
When $\sum_{i=1}^{k}\lambda_{i}=p$ we will say such a $\lambda$ is a partition of $p$, 
and write $\lambda \vdash p$. The number of nonzero $\lambda_{i}$'s is called the length of $\lambda$, 
and will be denoted by $l(\lambda)$.

Given a partition $\lambda \vdash n$, we consider the morphism of polynomial algebras 
\begin{eqnarray}\label{eq:rhol}
 \rho_{\lambda} : R[x_{1},\ldots,x_{n}] & \rightarrow & R[y_{1},\ldots,y_{l(\lambda)}] \\ \nonumber
 F(x_{1},\ldots,x_{n}) & \mapsto & F( \underbrace{y_{1},\ldots,y_{1}}_{\lambda_{1}} , \underbrace{y_{2},\ldots,y_{2}}_{\lambda_{2}}, \ldots,   \underbrace{y_{l(\lambda)},\ldots,y_{l(\lambda)}}_{\lambda_{l(\lambda)}}).
 \end{eqnarray}
where $y_{1},y_{2},\ldots,y_{l(\lambda)}$ are new indeterminates.  Since the polynomials $F^{ \{1\} }, F^{\{2\}}, \ldots, F^{\{n\}}$ satisfy to \eqref{eq:globsym}, they also satisfy to \eqref{eq:PiPj} (observe that Example \ref{ex:linform} shows that systems satisfying \eqref{eq:PiPj} 
are strictly more general than systems satisfying \eqref{eq:globsym}). Indeed, choose a pair of distinct integers $\{i,j\}\in [n]$ and let $\sigma \in \mathfrak{S}_{n}$ be such that $\sigma(k)=k$ if $k\notin \{i,j\}$ and $\sigma(i)=j$, then
\begin{equation}\label{eq:sperho}
 F^{\{i\}}-F^{\{j\}}=F^{\{i\}}-\sigma(F^{\{i\}}) \in (x_{i}-x_{j}).
\end{equation}
Therefore,  the polynomials $F^{ \{1\} }, F^{\{2\}}, \ldots, F^{\{n\}}$ admit divided differences. In addition, 
from their defining equality given in Lemma \ref{lem:divdiff} and from \eqref{eq:globsym}, 
we get that for any subset $\{i_{1},\ldots,i_{k}\}\subset [n]$ and any permutation $\sigma \in \mathfrak{S}_{n}$ 
we have
\begin{equation}\label{eq:perminvariance}
\sigma \left(  F^{\{i_{1},\ldots,i_{k}\}}   \right) = F^{\{  \sigma(i_{1}),\ldots,\sigma(i_{k}) \}}.
\end{equation}

Now, if $\rho_{\lambda}(x_{i})=\rho_{\lambda}(x_{j})$ then \eqref{eq:sperho} implies that
$$\rho_{\lambda}(F^{\{i\}})=\rho_{\lambda}(F^{\{j\}}).$$
So, for any integer $i\in [l(\lambda)]$ we can define without ambiguity the homogeneous polynomial of degree $d$
$$F_{\lambda}^{\{i\}}(y_{1},y_{2},\ldots,y_{l(\lambda)}) := \rho_{\lambda} \left( F^{\{j\}}(x_{1},\ldots,x_{n}) \right) 
\in R[y_{1},\ldots,y_{l(\lambda)}]$$
where $j\in [n]$ is such that $\rho_{\lambda}(x_{j})=y_{i}$. Moreover, these polynomials also satisfy \eqref{eq:PiPj} and hence they also admit divided differences ; we will denote them by $F_{\lambda}^{\{i_{1},\ldots,i_{r}\}}(y_{1},\ldots,y_{l(\lambda)})$ with $\{i_{1},\ldots,i_{r}\}\subset[l(\lambda)]$. From here, a straightforward application of Lemma \ref{lem:divdiff} shows the following property :   Given $I=\{i_{1},\dots,i_{k}\} \subset [n]$, define $J=\{j_{1},\dots,j_{k}\} \subset [l(\lambda)]$ by the equality $\rho_{\lambda}(x_{i_{r}})=y_{j_{r}}$ for all $r \in [k]$. Then, if  $|J|=|I|$ we have
  $$\rho_{\lambda}(F^{I}(x_{1},\dots,x_{n}))=
 F^{J}_{\lambda} (y_{1},\dots,y_{l(\lambda)}).$$

\subsection{The decomposition formula}\label{subsec:mainthm} Before stating the main result of this paper, we need to introduce a last notation. Given a partition $\lambda \vdash n$, its multinomial coefficient is defined as the integer 
\begin{equation}\label{eq:multinomial}
{{n}\choose{\lambda_1, \lambda_2,\ \ldots , \lambda_{l(\lambda)}}} := \frac{n!}{\lambda_1! \lambda_2! \cdots \lambda_{l(\lambda)}!}.
\end{equation}
It counts the number of distributions of $n$ distinct objects to $l(\lambda)$ distinct recipients such that
the recipient $i$ receives exactly $\lambda_i$ objects. In this way of counting, the objects are not ordered inside the boxes, but the boxes are ordered. If we do not want to count the permutations between the boxes having the same number of objects, then we have to divide the above multinomial coefficient by the number of all these permutations. If $s_{j}$ denotes the number of boxes having exactly $j$ objects, $j\in [n]$, then this number of permutations is equal to $\prod_{j=1}^{n} s_{j}!$. Finally, for any partition $\lambda\vdash n$ we define the integer
\begin{equation}\label{eq:mlambda}
m_{\lambda}:=\frac{1}{\prod_{j=1}^{n}s_{j}!} {{n}\choose{\lambda_1, \lambda_2,\ \ldots , \lambda_{l(\lambda)}}}.
\end{equation}

\begin{thm}\label{thm:maintheorem} Assume that $n\geq 2$ and $d\geq 1$. With the above notation, the following equalities hold.  

\noindent $\bullet$ If $d\geq n$ then 
\begin{equation*}
\Res\left(F^{\{1\}},\ldots,F^{\{n\}}\right)=\prod_{\substack{\lambda\vdash n}}
\Res\left( F_{\lambda}^{\{1\}}, F_{\lambda}^{\{1,2\}}, \ldots, F_{\lambda}^{\{1,2,\ldots,l(\lambda)-1\}}, F_{\lambda}^{\{1,2,\ldots,l(\lambda)\}} 
\right)^{m_{\lambda}}.
\end{equation*}

\noindent $\bullet$ If $d<n$ then 
\begin{multline*}
 \Res\left(F^{\{1\}},\ldots,F^{\{n\}}\right)= \\
 \left(F^{\{1,\ldots,d+1\}}\right)^{m_{0}}\times \prod_{ \substack{\lambda\vdash n \\ l(\lambda) \leq d}}
\Res\left( F_{\lambda}^{\{1\}}, F_{\lambda}^{\{1,2\}}, \ldots, F_{\lambda}^{\{1,2,\ldots,l(\lambda)-1\}}, F_{\lambda}^{\{1,2,\ldots,l(\lambda)\}} 
\right)^{m_{\lambda}}
\end{multline*}
where 
\begin{equation*}
 m_{0}:=nd^{n-1}-\sum_{ \substack{\lambda\vdash n \\ l(\lambda) \leq d}} m_{\lambda} 
\left(\sum_{j=1}^{l(\lambda)} \frac{d(d-1)\cdots(d-l(\lambda)+1) }{(d-j+1)} \right).
\end{equation*}
\end{thm}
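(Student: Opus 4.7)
My plan is to work in the universal setting where $R=\UU$ is the polynomial ring over $\ZZ$ in the coefficients of a generic $\mathfrak{S}_n$-equivariant system of degree $d$, and to verify the claimed identity as an equation in this UFD; the general case then follows by specialization. The strategy has three ingredients: (i) show that each factor on the right-hand side divides $\Res(F^{\{1\}},\ldots,F^{\{n\}})$ in $\UU$; (ii) determine the exact multiplicity with which each factor appears; (iii) match total degrees to close the argument.

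For divisibility, the key observation is that common projective zeros of a $\mathfrak{S}_n$-equivariant system are themselves $\mathfrak{S}_n$-invariant, so each such zero has an orbit type $\lambda\vdash n$ given by the multiset of coordinate-multiplicities. Using the specialization $\rho_\lambda$ of Subsection~\ref{subsec:divdiff+part}, the restriction of $F^{\{i\}}$ to the linear subspace identifying coordinates according to $\lambda$ equals $F_\lambda^{\{j\}}$, where $j$ indexes the block of $\lambda$ containing $i$; the restricted system is therefore $F_\lambda^{\{1\}}=\cdots=F_\lambda^{\{l(\lambda)\}}=0$. On the Zariski-open locus where $y_1,\ldots,y_{l(\lambda)}$ are pairwise distinct, this system is equivalent to $F_\lambda^{\{1\}}=F_\lambda^{\{1,2\}}=\cdots=F_\lambda^{\{1,\ldots,l(\lambda)\}}=0$, because Lemma~\ref{lem:divdiff} applied to the family $(F_\lambda^{\{j\}})_j$ expresses each divided difference as a linear combination of the $F_\lambda^{\{j\}}$'s divided by the invertible Vandermonde $V(y_1,\ldots,y_{l(\lambda)})$. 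Hence the vanishing of $\Res(F_\lambda^{\{1\}},\ldots,F_\lambda^{\{1,\ldots,l(\lambda)\}})$ in a specialization produces a common projective root of $F^{\{1\}},\ldots,F^{\{n\}}$ of orbit type (at least) $\lambda$, forcing $\Res(F^{\{1\}},\ldots,F^{\{n\}})=0$, and this set-theoretic containment upgrades to divisibility in $\UU$.

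The multiplicity $m_\lambda$ then arises as a pure orbit count. A zero $(y_1,\ldots,y_{l(\lambda)})$ of the divided-difference system with pairwise-distinct entries gives, via $\rho_\lambda$, a common projective root of $F^{\{1\}},\ldots,F^{\{n\}}$ of orbit type exactly $\lambda$, whose $\mathfrak{S}_n$-orbit has cardinality $\binom{n}{\lambda_1,\ldots,\lambda_{l(\lambda)}}$; two tuples give the same orbit iff they differ by a permutation of the blocks of equal size, a group of order $\prod_j s_j!$. Thus each generic zero of the divided-difference system contributes $m_\lambda=\binom{n}{\lambda_1,\ldots,\lambda_{l(\lambda)}}/\prod_j s_j!$ common roots of the original system, and the universality of the resultant promotes this count into the corresponding exponent. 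The total-degree check then closes the argument: the universal resultant is homogeneous of degree $nd^{n-1}$ in the coefficients, each factor is homogeneous of degree $\sum_{j=1}^{l(\lambda)}\prod_{i\neq j}(d-i+1)$ by the homogeneity of the resultant, and when $d\geq n$ (so all $\lambda\vdash n$ satisfy $l(\lambda)\leq d$) the identity $nd^{n-1}=\sum_{\lambda\vdash n} m_\lambda \sum_{j=1}^{l(\lambda)}\prod_{i\neq j}(d-i+1)$ is a combinatorial equality that must be verified independently, e.g.\ by an inclusion-exclusion or generating-function argument.

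When $d<n$, partitions with $l(\lambda)>d+1$ contribute trivially because their top divided difference vanishes by Remark~\ref{rem:deg0}, while partitions with $l(\lambda)=d+1$ yield a degenerate resultant whose last polynomial is the common constant $F_\lambda^{\{1,\ldots,d+1\}}=F^{\{1,\ldots,d+1\}}\in R$; all such contributions consolidate into the single extra factor $(F^{\{1,\ldots,d+1\}})^{m_0}$, and the exponent $m_0$ is then forced by degree balance to be exactly the expression stated. The main obstacle is pinning down the multiplicities $m_\lambda$ rigorously: the orbit count is combinatorially clear and the divisibility by each factor is reasonably direct, but converting the orbit count into the exact exponent in the universal factorization requires a careful infinitesimal analysis along each orbit-type stratum---say by constructing a local chart at a generic orbit-type-$\lambda$ common root and computing the order of vanishing of the resultant there---or alternatively a direct manipulation of the Macaulay matrix exploiting the block structure induced by the $\mathfrak{S}_n$-action.
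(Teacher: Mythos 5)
Your approach (divisibility of each factor via a geometric orbit-type argument, multiplicity via orbit counting, then degree matching) is genuinely different from the paper's proof, which never argues by divisibility at all: it performs an explicit iterated splitting of $\Res(F^{\{1\}},\ldots,F^{\{n\}})$ using the identities $F^{\{j\}}=F^{\{1\}}-(x_1-x_j)F^{\{1,j\}}$, the multiplicativity of the resultant, and the specialization property for linear factors, producing one factor per \emph{admissible partition} of $[n]$; the exponent $m_\lambda$ then appears as the cardinality of an equivalence class of admissible partitions (Lemma \ref{lem:proof}), equality of factors within a class is Proposition \ref{prop:proof}, and the degree identity you want to ``verify independently'' comes out as a corollary rather than an input. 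Unfortunately your route has gaps that are not just technical. First, the upgrade from set-theoretic containment of zero loci to divisibility in $\UU$ requires the candidate factor to be squarefree (or forces you to work irreducible factor by irreducible factor and then reassemble), and the paper's own computations show the resultants $\Res(F_\lambda^{\{1\}},\ldots,F_\lambda^{\{1,\ldots,l(\lambda)\}})$ need not be squarefree: for $d=2$, $n=2k$, $\lambda=(k,k)$ one gets $(a+bk)^2(a-dk)^2$ (equation \eqref{eq:m=n/2}). So the statement ``this set-theoretic containment upgrades to divisibility'' is false as written, and once you descend to irreducible factors you must also rule out shared irreducible components between different partitions $\lambda$ before the per-$\lambda$ exponents can be multiplied together.

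Second, and more seriously, the multiplicity step is exactly the hard part and you concede it: counting how many common roots of the original system a generic zero of the $\lambda$-system produces does not, by itself, give the order of vanishing of the specialized resultant along the corresponding stratum, and the ``careful infinitesimal analysis'' or ``Macaulay matrix manipulation'' you defer to is the missing proof. Third, even granting divisibility of the full right-hand side and equality of total degrees, you only conclude that the two sides agree up to an integer constant; fixing that constant to be $+1$ requires a normalization (the paper specializes to $F^{\{i\}}=x_i^d$ and uses $\Res(x_1^{d},\ldots,x_n^{d})=1$), which your outline omits. As it stands the proposal is a plausible programme, not a proof; the paper's splitting argument sidesteps all three difficulties by construction.
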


It is immediate to check that this theorem allows to recover the formulas given in Example \ref{eq:jpj} and Example \ref{ex:d=1}. Before giving its proof, we make some comments on some computational aspects.

 First,  we emphasize that the above formula holds over the universal ring of coefficients of the $\mathfrak{S}_{n}-$equivariant polynomial system $F^{\{1\}},\ldots,F^{\{n\}}$ (over 
$\ZZ$) and it is hence stable under specialization. Our second comment is on the number of terms in these decompositions. It is equal to the cardinality of the set
$$ \{  \ \lambda=(\lambda_{1},\ldots,\lambda_{k}) \vdash d \textrm{ such that }  n \geq \lambda_{1} \geq \lambda_{2} \geq \cdots \geq \lambda_{k} \}$$
that has been extensively studied (we refer the reader to the classical book \cite{MacDonald}). It is important to notice 
that these terms can actually be deduced from a very small number of resultant computations since these resultants are actually also universal with respect to the integers $\lambda_{1}, \lambda_{2},\ldots,\lambda_{l(\lambda)}$ defining a partition, providing $l(\lambda)$ is fixed. Therefore, all the terms in the two decompositions given in Theorem \ref{thm:maintheorem} can be obtained as specializations of only $\min\{n,d\}$ resultant computations. The following example illustrates this property.

\begin{exmp}\label{ex:d=2}  Consider the case $d=2$ and $n\geq 2$ with a polynomial system of the form 
$F^{\{i\}}=\sum_{i=0}^{2}x_{i}^{k}S_{k}$
where $S_{k}$ are symmetric homogeneous polynomials in $x_{1},\ldots,x_{n}$ . More precisely, we consider the polynomials
$$F^{\{i\}}(x_{1},\ldots,x_{n})=ax_{i}^{2}+bx_{i}e_{1}(x_{1},\ldots,x_{n})+ce_{1}(x_{1},\ldots,x_{n})^{2}+de_{2}(x_{1},\ldots,x_{n}), \ \ i=1,\ldots,n.$$
The partition $\lambda=(n)$ yields the factor 
$$\Res\left(F^{\{1\}}_{\lambda}\right)=a+nb+n^{2}c+{{n}\choose{2}}d$$
with multiplicity $m_{\lambda}=1$. 
From Theorem \ref{thm:maintheorem} we know that the other factors come from the partitions of length $2$. They are of the form $\lambda=(m, n-m)$ with $n-1\geq m\geq n-m\geq 1$. The divided difference $F^{\{1,2\}}$ is equal to $a(x_{1}+x_{2})+be_{1}$ and we have
$$\rho_{\lambda}(e_{1})=mx_{1}+(n-m)x_{2}, \ \ \rho_{\lambda}(e_{2})=\binom{m}{2}x_{1}^{2}+m(n-m)x_{1}x_{2}+\binom{n-m}{2}x_{2},$$
$$F_{\lambda}^{\{1,2\}}=\rho_{\lambda}\left(F^{\{1,2\}}\right)=a(x_{1}+x_{2})+b\rho_{\lambda}(e_{1})=a(x_{1}+x_{2})+b(mx_{1}+(n-m)x_{2}).$$
Therefore, such a partition $\lambda=(m,n-m)$ yields the factor
\begin{multline}\label{eq:d=2}
\Res\left( F^{\{1\}}_{(m,n-m)},F^{\{1,2\}}_{(m,n-m)} \right) = 
a{b}^{2}nm+2\,d{m}^{2}ab-1/2\,dm{b}^{2}{n}^{2}+1/2\,d{m}^{2}{b}^{2}n-2
\,dmn{a}^{2}-4\,cmn{a}^{2} \\ 
-2\,dmabn+1/2\,d{n}^{2}{a}^{2}+2\,d{m}^{2}{a
}^{2}+{a}^{2}bn-1/2\,dn{a}^{2}+c{n}^{2}{a}^{2}+4\,c{m}^{2}{a}^{2}-a{b}
^{2}{m}^{2}+{a}^{3}
\end{multline}
which is computed as the determinant of a $3\times 3$ Sylvester matrix. 
To summarize, if $n=2$ (and $d=2$) we get 
\begin{multline*}
 \Res(F^{\{1\}},F^{\{2\}})=\Res\left(F^{\{1\}}_{(2)}\right)\Res\left(F^{\{1\}}_{(1,1)},F^{\{1,2\}}_{(1,1)}\right)= 
(a+2b+4c+d)  \left( a+b \right) ^{2}  \left( a-d \right)
\end{multline*}
where $\Res\left(F^{\{1\}}_{(1,1)},F^{\{1,2\}}_{(1,1)}\right)$ is obtained by specialization of \eqref{eq:d=2}. 
If $n>2$ (and $d=2$) then it is easy to check that $F^{\{1,2,3\}}=a$. Therefore, if $n=2k+1$, $k$ being a positive integer, then
$$ \Res(F^{1},F^{2})=(a)^{m_{0}}\left( a+nb+n^{2}c+{{n}\choose{2}}d \right) 
 \prod_{m=k+1}^{n-1} \Res\left( F^{\{1\}}_{(m,n-m)},F^{\{1,2\}}_{(m,n-m)} \right)^{\frac{n!}{m!(n-m)!}}
$$
where the resultants in this formula are again given by \eqref{eq:d=2} and 
$$m_{0}=n2^{{n-1}}-1-3\sum_{m=k+1}^{{n-1}}\frac{n!}{m!(n-m)!}.$$
If $n=2k$ with $k>1$ then
\begin{multline*}
 \Res(F^{1},F^{2})=(a)^{m_{0}}\left( a+nb+n^{2}c+{{n}\choose{2}}d \right) \Res\left( F^{\{1\}}_{(k,k)},F^{\{1,2\}}_{(k,k)} \right)^{\frac{1}{2}\frac{n!}{(k!)^{2}}}\times \\
 \prod_{m=k+1}^{n-1} \Res\left( F^{\{1\}}_{(m,n-m)},F^{\{1,2\}}_{(m,n-m)} \right)^{\frac{n!}{m!(n-m)!}}
\end{multline*}
where the resultants in this formula are always given by \eqref{eq:d=2} and
$$m_{0}=n2^{{n-1}}-1-\frac{3}{2}\frac{n!}{(k!)^{2}} -3\sum_{m=k+1}^{{n-1}}\frac{n!}{m!(n-m)!}.$$
Before closing this example, we emphasize that the resultants appearing in Theorem \ref{thm:maintheorem} are not always (geometrically) irreducible polynomials. For instance, in the case where $n=2k$ is an even integer, we have
\begin{equation}\label{eq:m=n/2}
\Res\left( F^{\{1\}}_{(k,k)},F^{\{1,2\}}_{(k,k)} \right) = (a+bk)^{2}(a-dk)^{2}.
\end{equation}
However, we notice that $\Res(F^{\{1\}}_{\lambda})$ is obviously always irreducible (in the universal setting).
\end{exmp}

From a geometric point of view, Theorem \ref{thm:maintheorem} shows that the algebraic polynomial system 
$$\{F^{\{1\}}=0,\ldots,F^{\{n\}}=0\}$$ 
can be split into the smaller algebraic systems 
\begin{equation}\label{eq:systnoninvariant}
 \{F^{\{1\}}_{\lambda}=0,\ldots,F^{\{1,\ldots,l(\lambda)\}}_{\lambda}=0\}, \  \lambda \vdash n,  \ l(\lambda )\leq d
\end{equation}
with multiplicity $m_{\lambda}$, respectively.  For each given partition $\lambda=(\lambda_{1},\lambda_{2},\ldots,\lambda_{l(\lambda)})$, the algebraic systems \eqref{eq:systnoninvariant} correspond to particular configurations of the roots of the initial system, namely the roots whose coordinates can be grouped into $l(\lambda)$ blocks of size $\lambda_{1}, \ldots, \lambda_{l(\lambda)}$ respectively, up to permutations.

\subsection{Proof of Theorem \ref{thm:maintheorem}}\label{subsec:proof} 

We begin by splitting the resultant of the  $F^{{\{i\}}}$'s into several factors by means of their divided differences. This process can be divided into steps where we increase iteratively the order of the divided differences. Thus, in the first step we make use of the first order divided differences and write
\begin{multline}\label{eq:split1}
\Res\left(F^{\{1\}},F^{\{2\}},\ldots,F^{\{n\}}\right)=\\ 
\pm \Res\left(F^{\{1\}}, (x_{1}-x_{2})F^{\{1,2\}},(x_{1}-x_{3})F^{\{1,3\}},\ldots,(x_{1}-x_{n})F^{\{1,n\}}\right). 
\end{multline}
The divided differences $F^{\{1,j\}}$ are of degree $d-1$. If $d-1=0$ then they are all equal to the same constant by Remark \ref{rem:deg0} and it is straightforward to check that we get the claimed formula in this case, that is to say 
$$ \Res\left(F^{\{1\}},F^{\{2\}},\ldots,F^{\{n\}}\right)=\left(F^{{\{1,2\}}}\right)^{{n-1}}\Res \left(F^{\{1\}}_{(n)}\right)=\left(F^{{\{1,2\}}}\right)^{{n-1}} F^{\{1\}}(1,1,\ldots,1).$$
 If $d-1>0$, then \eqref{eq:split1} shows that the resultant of the  $F^{{\{i\}}}$'s splits into $2^{n-1}$ factors by using the multiplicativity property of the resultant : for each polynomial $(x_{1}-x_{j})F^{\{1,j\}}$, $j=2,\ldots,n$, there is a choice between  $(x_{1}-x_{j})$ and  the divided difference $F^{\{1,j\}}$. Thus, these factors are in bijection with the subsets of $[n]$ that contain 1. If $I_{1}=\{1,i_{2},i_{3},\ldots,i_{n-k+1}\}\subset [n]$ is such a subset, then the corresponding factor is simply
$$\pm \Res\left(  
F^{{\{1\}}},F^{\{1,j_{1}\}},F^{\{1,j_{2}\}},\ldots,F^{\{1,j_{k}\}},x_{1}-x_{i_{2}},x_{1}-x_{i_{3}},\ldots,x_{1}-x_{i_{n-k+1}}
\right)$$
where $\{j_{1},\ldots,j_{k-1}\}=[n]\setminus I_{1}$. Moreover, by the specialization property of the resultant this factor is equal to 
\begin{equation}\label{eq:step1}
 \pm \Res\left( 
F_{1}^{{\{1\}}},F_{1}^{\{1,2\}},F_{1}^{\{1,3\}},\ldots,F_{1}^{\{1,k\}}
\right)
\end{equation}
where we set $F_{1}^{\{1,r\}}:=\rho_{1}(F^{{\{1,j_{r}\}}})$, $\rho_{1}$ being a specialization map defined by 
\begin{eqnarray*}
 \rho_{1}: k[x_{1},\ldots,x_{n}] & \rightarrow & k[x_{1},\ldots,x_{k}]\\
 x_{j}, \, j\in I_{1}  & \mapsto & x_{1}\\
 x_{j_{r}}, \, r=1,\ldots,k-1 & \mapsto & x_{r+1}. 
\end{eqnarray*}
Roughly speaking, this amounts to put all the variables $x_{j}$, $j\in I_{1}$, in the ``same box'' and to renumber the other variables from 2 to $k$. 

Now, one can proceed to the second step by introducing the second order divided differences. For that purpose, we start from the factor \eqref{eq:step1} obtained at the end of the previous step. If $k\leq 2$ then we actually do nothing and the splitting of this factor stops here. Otherwise, If $k>2$ then we can proceed exactly as in the first step : Since 
$$(x_{2}-x_{j})F_{1}^{\{1,2,j\}}=F_{1}^{\{1,2\}}-F_{1}^{\{1,j\}}, \ j=3,\ldots,k,$$
we get 
\begin{multline*}
 \Res\left( 
F_{1}^{{\{1\}}},F_{1}^{\{1,2\}},F_{1}^{\{1,3\}},\ldots,F_{1}^{\{1,k\}}\right)=\\
\pm  \Res\left(F^{\{1\}}, F^{\{1,2\}},(x_{2}-x_{3})F^{\{1,2,3\}}, (x_{2}-x_{4})F^{\{1,2,4\}}, \ldots,(x_{2}-x_{k})F^{\{1,2,k\}}\right). 
\end{multline*}
So, we are exactly in the same setting as in the previous step and hence we split this factor similarly. As a result, the factors we obtain are in bijection with  subsets $I_{2}$ of $[n]$ that contain 2 but not 1. After this second step is completed, then one can continue to the third step, and so on. This splitting process stops for a given factor if either it involves divided differences of distinct orders or either the order of some divided differences is higher than the degree $d$. 

\medskip

In summary, the above process shows that the resultant $\Res\left(F^{\{1\}},F^{\{2\}},\ldots,F^{\{n\}}\right)$ splits into factors that are in bijection with ordered collections of subsets $(I_{1},\ldots,I_{k})$ that satisfy the following three conditions :
\begin{itemize}
 \item $1\leq k \leq \min\{d,n\} \textrm{ and }  \emptyset \neq I_{j} \subset [n] \textrm{ for all } j\in [k],$ 
 \vspace{.2em}
 \item $I_{1} \coprod I_{2} \coprod \ldots \coprod I_{k}=[n] \textrm{ (disjoint union, so this is a partition of } [n]),$
  \vspace{.2em}
 \item $1=\min(I_{1})< \min(I_{2})<\cdots < \min(I_{k}).$
\end{itemize}
\begin{defn}
A collection of subsets $(I_{1},\ldots,I_{k})$ satisfying to the three above conditions will be called an \emph{admissible partition} (of $[n]$).
\end{defn}
Given an admissible partition  $(I_{1},\ldots,I_{k})$, we define the specialization map 
\begin{eqnarray*}
\rho_{(I_{1},\ldots,I_{k})}:k[x_{1},\ldots,x_{n}] & \rightarrow & k[x_{1},\ldots,x_{k}] \\
 x_{r}, r \in I_{s} & \mapsto & x_{s} 
\end{eqnarray*}
and the polynomials $F_{(I_{1},\ldots,I_{k})}^{\{1,2,\ldots,r\}}:=\rho_{(I_{1},\ldots,I_{k})}(F^{\{1,i_{2},\ldots,i_{r}\}})$, $r=1,\ldots,k$, where we set 
$$i_{1}:=1=\min(I_{1})< i_{2}:=\min(I_{2})<\cdots < i_{k}:=\min(I_{k}).$$
Then, the  factor of the resultant of the $F^{\{i\}}$'s corresponding to the admissible partition $(I_{1},\ldots,I_{k})$  is given by
$$R_{(I_{1},\ldots,I_{k})}:= \Res\left( 
F_{(I_{1},\ldots,I_{k})}^{\{1\}}, F_{(I_{1},\ldots,I_{k})}^{\{1,2\}}, \ldots, F_{(I_{1},\ldots,I_{k})}^{\{1,2,\ldots,k\}}
\right).
$$
Therefore, we proved that
\begin{equation}\label{eq:intermproof}
 \Res\left(F^{\{1\}},F^{\{2\}},\ldots,F^{\{n\}}\right)=\pm \left(F^{\{1,\ldots,d+1\}} \right)^{\mu} \times \prod_{{(I_{1},\ldots,I_{k})}} R_{(I_{1},\ldots,I_{k})}
\end{equation}
where the product runs over all admissible partitions of $[n]$ and $\mu$ is an integer. Moreover, $\mu > 0$ if and only only if $n>d$. 

\medskip

Now, we define an equivalence relation $\sim$ on the set of admissible partitions of $[n]$. Given two admissible partitions $(I_{1},\ldots,I_{k})$ and $(J_{1},\ldots,J_{k'})$, we set 
$$ (I_{1},\ldots,I_{k}) \sim (J_{1},\ldots,J_{k'}) \Leftrightarrow 
\begin{cases}
 k=k' \textrm{ and }\\
 \exists \, \sigma \in \mathfrak{S}_{k} \textrm{ such that } |I_{l}|=|J_{\sigma(l)}| \textrm{ for all } l\in[k].
\end{cases}$$
It is straightforward to check that this binary relation is reflexive, symmetric and transitive so that it defines an equivalence relation. We denote by $[(I_{1},\ldots,I_{k})]$ its equivalence classes. Consider the admissible partitions $(L_{1},\ldots,L_{k})$ such that 
\begin{equation}\label{eq:defL}
 l_{1}:=|L_{1}|\geq l_{2}:=|L_{2}|\geq \ldots l_{k}:=|L_{k}| \ \textrm{ and }
\end{equation}
$$L_{j}:=\left\{1+\sum_{i=1}^{j-1} l_{i},2+\sum_{i=1}^{j-1} l_{i},\ldots,\sum_{i=1}^{j}l_{i}\right\} \textrm{ for all } j\in [k].$$
Obviously, there is exactly one such admissible partition in each equivalent class of $\sim$.  Moreover, these admissible partitions are in bijection with 
the partitions $\lambda \vdash n$ of length $k$ by setting $\lambda:=(l_{1},l_{2},\ldots,l_{k})\vdash n$.
As a consequence, we deduce that there is a bijection between  the equivalence classes of $\sim$ and the partitions $\lambda \vdash n$ of length $k$ and we write
$$ [\lambda]:=[(I_{1},\ldots,I_{k})] = [(L_{1},\ldots,L_{k})].$$

\begin{lem}\label{lem:proof} Let $\lambda$ be a partition of $n$, then the cardinality of the equivalence class $[\lambda]$ is $m_{\lambda}$.  
\end{lem}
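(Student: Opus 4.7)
The plan is to exhibit a bijection between the equivalence class $[\lambda]$ and the set of unordered set partitions of $[n]$ whose multiset of block sizes is $\{\lambda_1,\ldots,\lambda_{l(\lambda)}\}$, and then to count the latter by a standard orbit argument.

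First, I will observe that forgetting the order of blocks gives a bijection between admissible partitions of $[n]$ and unordered set partitions of $[n]$. Indeed, given any unordered set partition $\{A_1,\ldots,A_k\}$ of $[n]$, the minima $\min(A_1),\ldots,\min(A_k)$ are pairwise distinct, so there is a unique ordering $(I_1,\ldots,I_k)$ of the blocks for which $\min(I_1)<\cdots<\min(I_k)$. Moreover, the block containing $1$ is forced into the first position, which yields $\min(I_1)=1$ automatically. The third admissibility condition is thus satisfied, and this reorder-by-minima map is clearly the two-sided inverse of the forgetful map.

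Next, the equivalence relation $\sim$ identifies two admissible partitions precisely when their ordered block-size sequences differ by a permutation, equivalently when their block-size multisets agree. Hence $[\lambda]$ corresponds, via the bijection of the preceding paragraph, to the unordered set partitions of $[n]$ having exactly $s_j$ blocks of size $j$ for each $j\in[n]$.

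Finally, I will count these unordered partitions. The multinomial coefficient $\binom{n}{\lambda_1,\ldots,\lambda_{l(\lambda)}}$ enumerates ordered tuples $(A_1,\ldots,A_{l(\lambda)})$ of pairwise disjoint subsets of $[n]$ with $|A_j|=\lambda_j$. Any such unordered partition with the prescribed block-size multiset gives rise to as many ordered tuples as there are size-preserving bijections between its blocks and the $l(\lambda)$ positions, namely $\prod_{j=1}^{n}s_j!$. Dividing out, the count of unordered partitions equals $m_\lambda$ as defined in \eqref{eq:mlambda}, which yields $|[\lambda]|=m_\lambda$. The argument is routine enumerative combinatorics and no serious obstacle is anticipated; the only delicate point is the clean set-up of the minima-ordering bijection in the first step, which is what makes the admissibility condition exactly equivalent to choosing a canonical representative in each $\mathfrak{S}_{l(\lambda)}$-orbit.
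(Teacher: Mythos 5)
Your proof is correct and follows essentially the same route as the paper's: the multinomial coefficient counts ordered fillings of $l(\lambda)$ boxes, sorting by minima identifies these with admissible partitions, and each admissible partition in $[\lambda]$ has exactly $\prod_{j} s_j!$ ordered preimages coming from permuting equal-sized blocks. Your explicit intermediate step through unordered set partitions is just a cleaner packaging of the paper's argument, not a different method.
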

\begin{proof}
Let $\lambda$ be a partition of $n$ and consider the equivalent class  $[\lambda]$. The multinomial coefficient \eqref{eq:multinomial} counts the different ways of filling $k=\l(\lambda)$ boxes $J_{1},\ldots,J_{k}$ with $\lambda_{j}$ elements in the box $J_{j}$. These choices take into account the order between the boxes, but not inside the boxes. These boxes $J_{j}$ can obviously be identified with subsets of $[n]$. Moreover, there exists a unique permutation $\sigma \in \mathfrak{S}_{k}$ such that 
$$1=\min(J_{\sigma(1)}) < \min(J_{\sigma(2)}) < \cdots \min(J_{\sigma(k)})$$
and hence such that the collection of subsets $(J_{\sigma(1)},J_{\sigma(2)},\ldots,J_{\sigma(k)})$ is an admissible partition. Therefore, any choice for filling the boxes $J_{1},\ldots,J_{k}$ can be associated  to a factor in the decomposition. Conversely, such a factor is associated to an admissible partition $(I_{1},\ldots,I_{k})$, but there are possibly several choices, i.e.~permutations in $\mathfrak{S}_{k}$, that give a way of filling the  boxes $J_{1},\ldots,J_{k}$: it is possible to permute boxes that have the same cardinality. Therefore, we conclude that the cardinality of the equivalent class represented by a partition $\lambda \vdash n$ is exactly $m_{\lambda}$. 
\end{proof}

The following result shows that admissible partitions that are equivalents give the same factor, up to sign, in the splitting process. 

\begin{prop}\label{prop:proof}  Let $\lambda$ be a partition of $n$. Then, for any admissible partition $(I_{1},\ldots,I_{k})$ such that  $[\lambda]=[(I_{1},\ldots,I_{k})]$,  
$$R_{(I_{1},\ldots,I_{k})}=\pm 
\Res\left( F_{\lambda}^{\{1\}}, F_{\lambda}^{\{1,2\}}, \ldots, F_{\lambda}^{\{1,2,\ldots,l(\lambda)-1\}}, F_{\lambda}^{\{1,2,\ldots,l(\lambda)\}} 
\right).$$
\end{prop}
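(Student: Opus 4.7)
Let $(L_1, \ldots, L_k)$ denote the canonical representative of the class $[\lambda]$ (so $|L_j|=\lambda_j$ and the minima form an increasing sequence). Applying the specialization property recalled at the end of \S\ref{subsec:divdiff+part} with $I=\{1,\min(L_2),\ldots,\min(L_r)\}$ gives $\rho_{(L_1,\ldots,L_k)}(F^{I})=F_\lambda^{\{1,2,\ldots,r\}}$ for each $r=1,\ldots,k$, so that $R_{(L_1,\ldots,L_k)}$ equals the resultant on the right-hand side of the proposition. Hence it suffices to prove $R_{(I_1,\ldots,I_k)} = \pm R_{(L_1,\ldots,L_k)}$ for every admissible partition $(I_1,\ldots,I_k)\sim(L_1,\ldots,L_k)$.

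Fix $\sigma\in\mathfrak{S}_k$ with $|I_j|=|L_{\sigma(j)}|$, and pick any $\tau\in\mathfrak{S}_n$ satisfying $\tau(L_{\sigma(j)})=I_j$ for every $j\in[k]$. Let $\pi$ be the automorphism of $R[x_1,\ldots,x_k]$ permuting the variables according to $\sigma$, characterized by the commutation relation $\rho_{(I_1,\ldots,I_k)}\circ\tau = \pi\circ\rho_{(L_1,\ldots,L_k)}$ (an elementary verification on variables). Applying this commutation to the divided difference $F^{\{\ell_{\sigma(1)},\ldots,\ell_{\sigma(r)}\}}$, where $\ell_s := \min(L_s)$, in combination with the $\mathfrak{S}_n$-equivariance \eqref{eq:perminvariance} and the specialization formula for divided differences, yields the key identity
\begin{equation*}
F_{(I_1,\ldots,I_k)}^{\{1,2,\ldots,r\}} \;=\; \pi\!\left( F_\lambda^{\{\sigma(1),\sigma(2),\ldots,\sigma(r)\}} \right), \qquad r=1,\ldots,k.
\end{equation*}
Substituting into the definition of $R_{(I_1,\ldots,I_k)}$ and invoking the linear-change-of-variables property of the resultant (the matrix of $\pi$ is a permutation matrix of determinant $\pm 1$), we deduce
\begin{equation*}
R_{(I_1,\ldots,I_k)} \;=\; \pm\,\Res\!\left(F_\lambda^{\{\sigma(1)\}}, F_\lambda^{\{\sigma(1),\sigma(2)\}}, \ldots, F_\lambda^{\{\sigma(1),\ldots,\sigma(k)\}}\right).
\end{equation*}

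It then remains to show that this latter resultant coincides with $\pm R_{(L_1,\ldots,L_k)}$, i.e.\ that reordering the successive divided differences through $\sigma$ preserves the resultant up to sign. The plan is to establish this by iterated elementary transformations, processed top-down from $r=k$ to $r=1$. Position $r=k$ is already correct, as $\{\sigma(1),\ldots,\sigma(k)\}=\{1,\ldots,k\}$ forces $F_\lambda^{\{\sigma(1),\ldots,\sigma(k)\}}=F_\lambda^{\{1,2,\ldots,k\}}$. Suppose by descending induction that entries at positions $r+1,\ldots,k$ have already been replaced by $F_\lambda^{\{1,2,\ldots,s\}}$ ($s=r+1,\ldots,k$). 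Corollary \ref{cor:ideal} gives $F_\lambda^{\{\sigma(1),\ldots,\sigma(r)\}}-F_\lambda^{\{1,2,\ldots,r\}} \in (F_\lambda^{\{K\}}:|K|=r+1)$.

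The main obstacle, and the delicate part of the argument, is to refine this membership into a homogeneous combination of the form $\sum_{s=r+1}^{k} h_s F_\lambda^{\{1,2,\ldots,s\}}$ with $\deg h_s = s-r$, since Corollary \ref{cor:ideal} only provides membership in the ideal generated by \emph{all} cardinality-$(r+1)$ divided differences, not merely $F_\lambda^{\{1,\ldots,r+1\}}$. To overcome this, one iterates the corollary: each extraneous generator $F_\lambda^{\{K\}}$ with $|K|=r+1$ is congruent to $F_\lambda^{\{1,\ldots,r+1\}}$ modulo the ideal of cardinality-$(r+2)$ divided differences, each of those in turn is congruent to $F_\lambda^{\{1,\ldots,r+2\}}$ modulo the cardinality-$(r+3)$ divided differences, and so on until reaching the unique top-level $F_\lambda^{\{1,\ldots,k\}}$; tracking homogeneous degrees along the way yields the desired expression with $h_s$ of degree $s-r$. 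The elementary-transformations property of the resultant then permits replacing the $r$-th entry by $F_\lambda^{\{1,2,\ldots,r\}}$ without altering the value. After $k-1$ such steps the resultant equals $R_{(L_1,\ldots,L_k)}$, which combined with the previous display concludes the proof.
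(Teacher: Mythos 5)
Your proof is correct, and the first half (the choice of $\tau$, the commutation $\rho_{(I_1,\ldots,I_k)}\circ\tau=\pi\circ\rho_{(L_1,\ldots,L_k)}$, and the resulting identity $F_{(I_1,\ldots,I_k)}^{\{1,\ldots,r\}}=\pi\bigl(F_\lambda^{\{\sigma(1),\ldots,\sigma(r)\}}\bigr)$) matches the paper's use of equivariance and a lifted permutation. Where you diverge is in how the reordering is absorbed. The paper first reduces to the case where $\sigma$ is an \emph{adjacent transposition} (using that these generate $\mathfrak{S}_k$ and that $\sim$ is transitive), and it keeps working upstairs in $R[x_1,\ldots,x_n]$, realizing $R_{(I_1,\ldots,I_k)}$ as the resultant of the $n$ polynomials $F^{\{1\}},F^{\{1,i_2\}},\ldots,F^{\{1,i_2,\ldots,i_k\}}$ together with the linear forms $x_{i_j}-x_r$. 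With only two adjacent blocks swapped, exactly one entry of the list is wrong, and a \emph{single} application of Proposition \ref{prop:divdiff}, namely $F^{\{1,j_2,\ldots,j_s\}}=F^{\{1,j_2,\ldots,j_{s+1}\}}+(x_{j_s}-x_{j_{s+1}})F^{\{1,j_2,\ldots,j_s,j_{s+1}\}}$, repairs it as one elementary transformation — the needed higher divided difference is literally the next entry of the list. You instead pass to the specialized $k$-variable system and treat a general $\sigma$ at once, which forces you to normalize all $k$ entries by descending induction and, at each stage, to upgrade the bare ideal membership of Corollary \ref{cor:ideal} into a homogeneous combination of the already-normalized $F_\lambda^{\{1,\ldots,s\}}$, $s>r$, by iterating the corollary through all cardinalities up to $k$. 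That refinement is the delicate point you correctly flag; it does go through (the telescoping in the proofs of Proposition \ref{prop:divdiff} and Corollary \ref{cor:ideal} produces linear-form coefficients at each stage, so the degrees $\deg h_s=s-r$ come out right, and the recursion terminates because $\{1,\ldots,k\}$ is the unique top-cardinality subset), but it is exactly the bookkeeping the paper's transposition reduction is designed to avoid. In exchange, your version dispenses with the generation-by-transpositions argument and never needs the auxiliary linear forms among the resultant entries.
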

\begin{proof} Let $(I_{1},\ldots,I_{k})$ be an admissible partition and set 
$$i_{1}:=1=\min(I_{1})< i_{2}:=\min(I_{2})<\cdots < i_{k}:=\min(I_{k}).$$
Its corresponding factor in the splitting process is nothing but the resultant, up to sign, of the following list of $n$ polynomials in the $n$ variables 
$x_{1},\ldots,x_{n}$:
\begin{equation}\label{eq:listI}
 F^{\{1\}}, F^{\{1,i_{2}\}}, \ldots, F^{\{1,i_{2},\ldots,i_{k}\}},
\left\{x_{i_{1}}-x_{r}\right\}_{r\in I_{1}\setminus \{1\}}, 
\ldots,
\left\{x_{i_{k}}-x_{r}\right\}_{r\in I_{k}\setminus \{i_{k}\}}.
\end{equation}
Now, let $(J_{1},J_{2},\ldots,J_{k})$ be another admissible partition such that $[(I_{1},\ldots,I_{k})]=[(J_{1},J_{2},\ldots,J_{k})]$ and set 
$$j_{1}:=1=\min(J_{1})< j_{2}:=\min(J_{2})<\cdots < j_{k}:=\min(J_{k}).$$ 
The corresponding factor of $(J_{1},J_{2},\ldots,J_{k})$ can be described similarly as the resultant, up to sign, of the polynomials
\begin{equation}\label{eq:listJ}
 F^{\{1\}}, F^{\{1,j_{2}\}}, \ldots, F^{\{1,j_{2},\ldots,j_{k}\}},
\left\{x_{j_{1}}-x_{r}\right\}_{r\in J_{1}\setminus \{1\}}, 
\ldots,
\left\{x_{j_{k}}-x_{r}\right\}_{r\in J_{k}\setminus \{j_{k}\}}.
\end{equation}

First, observe that it is sufficient to prove that $R_{(I_{1},\ldots,I_{k})}=\pm R_{(J_{1},\ldots,J_{k})}$ by assuming that $|I_{\sigma(l)}|=|J_{l}|$ for all $l\in [k]$ where $\sigma$ is an elementary transposition (a permutation which exchanges two succesive elements and keeps all the others fixed) in $\mathfrak{S}_{k}$. This is because $\mathfrak{S}_{k}$ is generated by the elementary transpositions and because of the transitivity of $\sim$. So, let $s\in [k-1]$ and assume that 
$$  |I_{s}|=|J_{{s+1}}|, \ |I_{s+1}|=|J_{s}|    \textrm{ and }   |I_{l}|=|J_{l}| \textrm{ for all } l\in [k]\setminus\{s,s+1\}.$$
Let us choose a permutation $\tau \in \mathfrak{S}_{n}$  such that 
$$\begin{cases}
 \tau(I_{l})=J_{l} \textrm{ and } \tau(i_{l})=j_{l} \textrm{ for all } l\in [k], \\
 \tau(I_{s})=J_{s+1} \textrm{ and } \tau(i_{s})=j_{s+1}, \\
 \tau(I_{s+1})=J_{s} \textrm{ and } \tau(i_{s+1})=j_{s}.
\end{cases}$$
By the property \eqref{eq:perminvariance},  the application of $\tau$ on the list of polynomials \eqref{eq:listI} returns the following list of polynomials
\begin{multline}\label{eq:list3}
 F^{\{1\}}, F^{\{1,j_{2}\}}, \ldots,  F^{\{1,j_{2},\ldots,j_{s-1},j_{s+1}\}},F^{\{1,j_{2},\ldots,j_{s-1},j_{s},j_{s+1}\}},\ldots, F^{\{1,j_{2},\ldots,j_{k}\}}, \\
\left\{x_{j_{1}}-x_{r}\right\}_{r\in J_{1}\setminus \{1\}}, \ldots, 
\left\{x_{j_{s-1}}-x_{r}\right\}_{r\in J_{s-1}\setminus \{j_{s-1}\}},
\left\{x_{j_{s+1}}-x_{r}\right\}_{r\in J_{s+1}\setminus \{j_{s+1}\}}, \\
\left\{x_{j_{s}}-x_{r}\right\}_{r\in J_{s}\setminus \{j_{s}\}},
\ldots,
\left\{x_{j_{k}}-x_{r}\right\}_{r\in J_{k}\setminus \{j_{k}\}}.
\end{multline}
By the invariance, up to sign, of the resultant under permutations of polynomials and variables (see \S \ref{subsec:resultant}), we get that the resultant of the list of polynomials \eqref{eq:listI}, i.e.~$R_{(I_{1},\ldots,I_{k})}$, is equal to the resultant of the list of polynomials \eqref{eq:list3} up to sign. Now, by Proposition \ref{prop:divdiff}, we have
$$F^{\{1,j_{2},\ldots,j_{s-1},j_{s}\}}=F^{\{1,j_{2},\ldots,j_{s-1},j_{s+1}\}} +(x_{j_{s}}-x_{j_{s+1}})F^{\{1,j_{2},\ldots,j_{s-1},j_{s},j_{s+1}\}}$$ 
so that the resultant of the polynomials \eqref{eq:list3} is equal, up to sign, to the resultant of the polynomials \eqref{eq:listJ}, i.e.~$R_{(J_{1},\ldots,J_{k})}$, by invariance of the resultant under the above elementary transformation and permutations of polynomials. Therefore, we have proved that $R_{(I_{1},\ldots,I_{k})}=\pm R_{(J_{1},\ldots,J_{k})}$.

Finally, to conclude the proof, let $(L_{1},\ldots,L_{k})$ be the particular representative of the class $[\lambda]=[(I_{1},\ldots,I_{k})]$ as defined in \eqref{eq:defL}. Then, it is clear by the definitions that $\rho_{(L_{1},\ldots,L_{k})}=\rho_{\lambda}$ and that
$$R_{(L_{1},\ldots,L_{k})}= 
\Res\left( F_{\lambda}^{\{1\}}, F_{\lambda}^{\{1,2\}}, \ldots, F_{\lambda}^{\{1,2,\ldots,l(\lambda)-1\}}, F_{\lambda}^{\{1,2,\ldots,l(\lambda)\}} 
\right).$$
\end{proof}

The comparison of \eqref{eq:intermproof}, Lemma \ref{lem:proof} and Proposition \ref{prop:proof} shows that  if $d\geq n$ then
\begin{equation*}
\Res\left(F^{\{1\}},\ldots,F^{\{n\}}\right)= \pm \prod_{\substack{\lambda\vdash n}}
\Res\left( F_{\lambda}^{\{1\}}, F_{\lambda}^{\{1,2\}}, \ldots, F_{\lambda}^{\{1,2,\ldots,l(\lambda)-1\}}, F_{\lambda}^{\{1,2,\ldots,l(\lambda)\}} 
\right)^{m_{\lambda}}
\end{equation*}
 and if  $n>d$ then
\begin{multline*}
 \Res\left(F^{\{1\}},\ldots,F^{\{n\}}\right)= \\
\pm  \left(F^{\{1,\ldots,d+1\}}\right)^{\mu}\prod_{ \substack{\lambda\vdash n \\ l(\lambda) \leq d}}
\Res\left( F_{\lambda}^{\{1\}}, F_{\lambda}^{\{1,2\}}, \ldots, F_{\lambda}^{\{1,2,\ldots,l(\lambda)-1\}}, F_{\lambda}^{\{1,2,\ldots,l(\lambda)\}} 
\right)^{m_{\lambda}}.
\end{multline*}

To determine the integer $\mu$, we compare the degrees with respect to the coefficients of the $F^{\{i\}}$'s. The resultant on the left side is homogeneous of degree $d^{n-1}$ with respect to the coefficients of each polynomial $F^{\{i\}}$, so it is homogeneous of degree $nd^{n-1}$ with respect to  the coefficients of all the polynomials $F^{\{i\}}$, $i=1,\ldots,n$. Given a partition $\lambda \vdash n$, $l(\lambda)\leq d$, the polynomial $F_{\lambda}^{\{1,2,\ldots,j\}}$, $1\leq j \leq l(\lambda)$ is of degree $d-j+1$ by Lemma \ref{lem:divdiff}. Therefore, the resultant associated to this partition $\lambda$ is homogeneous with respect to the coefficients of the $F^{\{i\}}$'s of degree
$$ \sum_{j=1}^{l(\lambda)} \frac{d(d-1)\cdots (d-l(\lambda)+1)}{d-j+1}.$$
Finally, since $F^{\{1,2,\ldots,d+1\}}$ is homogeneous of degree one in the coefficient of the $F^{\{i\}}$'s (see the defining equality in Lemma \ref{lem:divdiff}), we deduce that $\mu$ is equal to the integer $m_{0}$ defined in the statement of Theorem \ref{thm:maintheorem}. 

\begin{rem}
If we apply the above degree counting in the case $d\geq n$, we get the following combinatorial formula for which we do not know if it is known: if $d\geq n$ then 
$$nd^{n-1}=\sum_{\lambda\vdash n} m_{\lambda}
\left(
\sum_{j=1}^{l(\lambda)}
\frac{d(d-1)\cdots (d-l(\lambda)+1)}{d-j+1}
\right).$$
\end{rem}

\medskip

To conclude the proof of Theorem \ref{thm:maintheorem}, it remains to determine the sign $\pm$ that occurs in the two formulas. For that purpose, we examine the specialization of these formulas to the case where $F^{\{i\}}=x_{i}^{d}$, $i=1,\ldots,n$. First, it follows from \eqref{eq:normres} that the resultant of the $F^{\{i\}}$'s is equal to 1. Now, given any partition $\lambda\vdash n$, it is straightforward to check that $F_{\lambda}^{\{1\}}=x_{1}^{d}$. Then applying iteratively Proposition \ref{prop:divdiff} from $j=1$ to $j=l(\lambda)$, it follows that 
$$F_{\lambda}^{\{1,2,\ldots,j\}}=x_{j}^{d} \mod (x_{1},\ldots,x_{j-1}), \ \ j=1,\ldots,l(\lambda).$$
From here, using the multiplicativity property of the resultant and its invariance under elementary transformations, we deduce that all the resultants associated to a partition $\lambda$ specialize to 1. Finally, by Lemma \ref{lem:divdiff} it appears that $F^{\{1,\ldots,d+1\}}$ also specializes to 1 in the case $n>d$ and this concludes the proof of Theorem \ref{thm:maintheorem}.

\subsection{Averaging over the divided differences of the same order} \label{subset:averaging}
Since the polynomials $F^{ \{1\} }, F^{\{2\}}, \ldots, F^{\{n\}}$ satisfy to the property \eqref{eq:perminvariance}, it follows that for any integer $k\in [n]$ 
$$\sum_{\substack{I\subset [n], \, |I|=k}} F^{I}=\sum_{\substack{I\subset [n], \, |I|=k}} F^{\sigma(I)}=\sum_{\substack{I\subset [n], \, |I|=k}} \sigma(F^{I})=\sigma\left(\sum_{\substack{I\subset [n], \, |I|=k}} F^{I}\right).$$
Therefore, for all $k\in [n]$, the polynomial  $\sum_{\substack{I\subset [n], \, |I|=k}} F^{I}$ is symmetric. Such a property is useful for applying various polynomial system solving methods (see e.g.~\cite{FS12}). In general, this property is no longer true if we consider $F^{I}_{\lambda}$ instead of $F^{I}$ (except for the case $\lambda=(1,1,\ldots,1)$ which is the case investigated in \cite[\S 3.3]{FS12}). Nevertheless, it is possible to reformulate Theorem \ref{thm:maintheorem} by means of these sums of divided differences of the same order.

\begin{prop}\label{prop:sym} Taking again the notation of Theorem \ref{thm:maintheorem}, then for any partition $\lambda \vdash n$ such that 
$l(\lambda)\leq \min \{d,n\}$ we have
\begin{multline*}
 \Res\left( \sum_{I\subset [l(\lambda)], \, |I|=1} F_\lambda^{I}, \sum_{I\subset [l(\lambda)], \, |I|=2} F_\lambda^{I}, \ldots, \sum_{I\subset [l(\lambda)], \, |I|=l(\lambda)-1} F_\lambda^{I},
 F_\lambda^{\{1,2,\ldots,l(\lambda\})}\right)= \\
 \left(\prod_{k=1}^{l(\lambda)-1} \binom{l(\lambda)}{k}^{\frac{d(d-1)(d-2)\cdots(d-l(\lambda)+1)}{d-k+1}}
 \right)
 \Res\left( F_{\lambda}^{\{1\}}, F_{\lambda}^{\{1,2\}}, \ldots, F_{\lambda}^{\{1,2,\ldots,l(\lambda)-1\}}, F_{\lambda}^{\{1,2,\ldots,l(\lambda)\}} 
\right).
 \end{multline*}
\end{prop}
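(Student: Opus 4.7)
The plan is to rewrite the resultant on the right-hand side iteratively, replacing the divided difference in position $k$ by the symmetric sum $S_k := \sum_{I\subset [l(\lambda)],\, |I|=k} F_\lambda^I$ for $k=1,\ldots,l(\lambda)-1$, and tracking the scalar factor produced at each step. The linchpin is the decomposition
$$S_k = \binom{l(\lambda)}{k}\, F_\lambda^{\{1,\ldots,k\}} + g_k, \qquad g_k \in \bigl(F_\lambda^{\{1,\ldots,k+1\}}, F_\lambda^{\{1,\ldots,k+2\}}, \ldots, F_\lambda^{\{1,\ldots,l(\lambda)\}}\bigr),$$
valid for every $k\in\{1,\ldots,l(\lambda)-1\}$. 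This is obtained by writing $S_k - \binom{l(\lambda)}{k} F_\lambda^{\{1,\ldots,k\}} = \sum_{|I|=k}\bigl(F_\lambda^I - F_\lambda^{\{1,\ldots,k\}}\bigr)$, noting that each summand already lies in the ideal generated by the size-$(k+1)$ divided differences by Corollary \ref{cor:ideal}, and then iterating the same corollary to rewrite every $F_\lambda^K$ with $|K|=k+1$ as $F_\lambda^{\{1,\ldots,k+1\}}$ plus an element of the ideal generated by size-$(k+2)$ divided differences, and so on up to size $l(\lambda)$, where only $F_\lambda^{\{1,\ldots,l(\lambda)\}}$ survives. Since Proposition \ref{prop:divdiff} provides a homogeneous relation with a linear factor $x_p-x_q$ at every rewriting step, the multiplier of $F_\lambda^{\{1,\ldots,j\}}$ in $g_k$ can be chosen homogeneous of degree $j-k$, which is precisely the degree gap $\deg S_k - \deg F_\lambda^{\{1,\ldots,j\}}$.

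Given this decomposition, I introduce the intermediate resultants
$$R_k := \Res\bigl(S_1, \ldots, S_{k-1},\, F_\lambda^{\{1,\ldots,k\}},\, F_\lambda^{\{1,\ldots,k+1\}}, \ldots, F_\lambda^{\{1,\ldots,l(\lambda)\}}\bigr), \quad 1\leq k\leq l(\lambda),$$
so that $R_1$ is the right-hand side of the proposition and $R_{l(\lambda)}$ the left-hand side. To pass from $R_k$ to $R_{k+1}$, the polynomial in position $k$ becomes $S_k = \binom{l(\lambda)}{k} F_\lambda^{\{1,\ldots,k\}} + g_k$, with $g_k$ a homogeneous combination of the polynomials occupying positions $k+1,\ldots,l(\lambda)$ of $R_{k+1}$ and with multipliers of the correct degrees. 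The elementary transformation property of the resultant (Section \ref{subsec:resultant}) then cancels the contribution of $g_k$, and the homogeneity property extracts the scalar $\binom{l(\lambda)}{k}$ raised to the exponent
$$e_k = \prod_{\substack{j=1\\ j\neq k}}^{l(\lambda)} (d-j+1) = \frac{d(d-1)(d-2)\cdots(d-l(\lambda)+1)}{d-k+1},$$
so that $R_{k+1} = \binom{l(\lambda)}{k}^{e_k} R_k$. Iterating from $k=1$ to $k=l(\lambda)-1$ yields the claimed identity.

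The principal difficulty lies in the first step: one must verify that the chain of ideal memberships produced by iterated applications of Corollary \ref{cor:ideal} is realized by homogeneous multipliers of the exact degrees required so that the elementary transformation property applies verbatim in the inductive step. Everything else is bookkeeping: the binomial factor $\binom{l(\lambda)}{k}$ appears because there are exactly $\binom{l(\lambda)}{k}$ subsets of $[l(\lambda)]$ of cardinality $k$, and the exponent $e_k$ is dictated by the standard homogeneity of the resultant applied to polynomials of degrees $(d,d-1,\ldots,d-l(\lambda)+1)$.
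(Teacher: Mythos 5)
Your proposal is correct and follows essentially the same route as the paper: the key congruence $\sum_{|I|=k}F_\lambda^I \equiv \binom{l(\lambda)}{k}F_\lambda^{\{1,\ldots,k\}}$ modulo the ideal generated by the higher-order divided differences $F_\lambda^{\{1,\ldots,k+1\}},\ldots,F_\lambda^{\{1,\ldots,l(\lambda)\}}$, obtained by iterating Corollary \ref{cor:ideal}, followed by the elementary-transformation and homogeneity properties of the resultant. Your explicit check that the multipliers arising from Proposition \ref{prop:divdiff} are homogeneous of the exact degrees $j-k$ required by the elementary-transformation property is a point the paper leaves implicit, but it does not change the argument.
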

\begin{proof} For any subset $I\subset [l(\lambda)]$ such that $|I|=l(\lambda)-1$, Corollary \ref{cor:ideal} shows that
\begin{equation}\label{eq:redconstant}
 F_{\lambda}^{I}=F_{\lambda}^{\{1,2,\ldots,l(\lambda)-1\}} \mod \left( F_{\lambda}^{\{1,2,\ldots,l(\lambda\})}  \right)
\end{equation}
from we deduce that
$$\sum_{I\subset [l(\lambda)], \, |I|=l(\lambda)-1} F_\lambda^{I} = l(\lambda)\, F_{\lambda}^{\{1,2,\ldots,l(\lambda)-1\}} \mod \left( F_{\lambda}^{\{1,2,\ldots,l(\lambda\})}  \right).$$
In the same way, for any subset $I\subset [l(\lambda)]$ such that $|I|=l(\lambda)-2$, 
Corollary \ref{cor:ideal}  shows that
$$F_{\lambda}^{I}=F_{\lambda}^{\{1,2,\ldots,l(\lambda)-2\}} \mod \left(  \left\{ F_{\lambda}^{I}  \right\}_{|I|=l(\lambda)-1},  F_{\lambda}^{\{1,2,\ldots,l(\lambda\})}  \right).$$
Using \eqref{eq:redconstant}, this equality can be simplified to give 
$$F_{\lambda}^{I}=F_{\lambda}^{\{1,2,\ldots,l(\lambda)-2\}} \mod \left(  F_{\lambda}^{\{1,2,\ldots,l(\lambda)-1 \}}  ,  F_{\lambda}^{\{1,2,\ldots,l(\lambda\})}  
\right).$$
We deduce that 
$$\sum_{I\subset [l(\lambda)], \, |I|=l(\lambda)-2} F_\lambda^{I} = \binom{l(\lambda)}{2}\, F_{\lambda}^{\{1,2,\ldots,l(\lambda)-2\}} \mod \left(  F_{\lambda}^{\{1,2,\ldots,l(\lambda)-1 \}}, F_{\lambda}^{\{1,2,\ldots,l(\lambda\})}  \right).$$
By applying iteratively this method, we obtain for all $k=1,\ldots,l(\lambda)-1$ the equality
\begin{equation*}\label{eq:reducsym}
 \sum_{I\subset [l(\lambda)], \, |I|=l(\lambda)-k} F_\lambda^{I} = \binom{l(\lambda)}{k}\, F_{\lambda}^{\{1,2,\ldots,l(\lambda)-k\}} \mod \left(  F_{\lambda}^{\{1,2,\ldots,l(\lambda)-k+1 \}}, \ldots, F_{\lambda}^{\{1,2,\ldots,l(\lambda\})}  \right).
\end{equation*}
From these equalities, the invariance of the resultant under elementary transformations yields the equality (proceed from the right to the left)
\begin{multline*}
 \Res\left( \sum_{I\subset [l(\lambda)], \, |I|=1} F_\lambda^{I}, \sum_{I\subset [l(\lambda)], \, |I|=2} F_\lambda^{I}, \ldots, \sum_{I\subset [l(\lambda)], \, |I|=l(\lambda)-1} F_\lambda^{I},
 F_\lambda^{\{1,2,\ldots,l(\lambda\})}\right)= \\
 \Res\left( \binom{l(\lambda)}{1} F_{\lambda}^{\{1\}}, \binom{l(\lambda)}{2}F_{\lambda}^{\{1,2\}}, \ldots, \binom{l(\lambda)}{1} F_{\lambda}^{\{1,2,\ldots,l(\lambda)-1\}}, F_{\lambda}^{\{1,2,\ldots,l(\lambda)\}} 
\right).
\end{multline*}
 Now, the claimed result follows from the multi-homogeneity of the resultant since the polynomials $F_{\lambda}^{I}$ are homogeneous of degree $d-|I|+1$.
\end{proof}

As a consequence of the proof of this proposition, we see that the big constant factor can be removed by taking averages in the sums of divided differences of the same order. More precisely, assume that the coefficient ring contains the rational numbers and set
$$\mathcal{F}_{\lambda}^{(k)} :=\frac{1}{\binom{l(\lambda)}{k}}\sum_{\substack{I\subset [l(\lambda)], \, |I|=k}} F^{I}_{\lambda}.$$
Then, we obtain the equality
  \begin{equation*}
 \Res\left( \mathcal{F}_{\lambda}^{(1)} , \mathcal{F}_{\lambda}^{(2)} , \ldots, \mathcal{F}_{\lambda}^{(l(\lambda))} \right)=
   \Res\left(  F_{\lambda}^{\{1\}}, F_{\lambda}^{\{1,2\}}, \ldots,  F_{\lambda}^{\{1,2,\ldots,l(\lambda)-1\}}, F_{\lambda}^{\{1,2,\ldots,l(\lambda)\}} 
\right).
\end{equation*}

\begin{exmp} Taking again the notation of Example \ref{ex:d=2}, a direct computation shows that
$$\Res\left( F^{\{1\}}_{(m,n-m)}+F^{\{2\}}_{(m,n-m)},F^{\{1,2\}}_{(m,n-m)} \right) = 2 \, \Res\left( F^{\{1\}}_{(m,n-m)},F^{\{1,2\}}_{(m,n-m)} \right).$$
\end{exmp}

\section{Discriminant of a homogeneous symmetric polynomial}\label{sec:discriminant}

The discriminant of a homogeneous polynomial is a rather complicated object which is known to be irreducible in the universal setting over the integers (see for instance \cite[\S 4]{BuJo12}). The purpose of this section is to prove  that when the homogeneous polynomial is symmetric then its discriminant can be decomposed into the product of several resultants that are in principle easier to compute (see Theorem \ref{thm:discmaintheorem}). We will obtain this result by specialization of the two formulas given  in Theorem \ref{thm:maintheorem}. 

\medskip

Fix a positive integer $n\geq 2$. For any integer $p$ we will denote by $e_{p}(x_{1},\ldots,x_{n})$ the $p^{\mathrm{th}}$ elementary symmetric polynomial in the variables $x_{1},\ldots,x_{n}$. They satisfy to the equality
$$\sum_{p\geq 0} e_{p}(x )t^{p}=\prod_{i=1}^{n}(1+x_{i}t)$$
(observe that $e_{0}(x)=1$ and that $e_{p}(x)=0$ for all $p>n$). For any partition $\lambda=(\lambda_{1} \geq \cdots \geq \lambda_{k})$ we also define the polynomial
$$e_{\lambda}(x):=e_{\lambda_{1}}( x) e_{\lambda_{2}}( x) \cdots e_{\lambda_{k}}( x) \in \ZZ[x_{1},\ldots,x_{n}].$$
Given a positive integer $d$, it is well known that the set 
\begin{equation}\label{elambda}
 \{ e_{\lambda}(x) \ : \ \lambda=(\lambda_{1},\ldots,\lambda_{k}) \vdash d \textrm{ such that }  n \geq \lambda_{1} \geq \lambda_{2} \geq \cdots \geq \lambda_{k} \}
\end{equation}
is a basis (over $\ZZ$) of the homogeneous symmetric polynomials of degree $d$ in $n$ variables. In other words, any homogeneous symmetric polynomial of degree $d$ with coefficients in a commutative ring is obtained as specialization of the generic homogeneous symmetric polynomial of degree $d$
\begin{equation}\label{eq:F}
 F(x_{1},\ldots,x_{n}):=\sum_{\lambda \vdash d} c_{\lambda}e_{\lambda}(x) \in \ZZ[c_{\lambda} : \lambda \vdash d][x_{1},\ldots,x_{n}].
\end{equation}
We will denote by $\UU$ its universal ring of coefficients $\ZZ[c_{\lambda} : \lambda \vdash d]$. In addition, 
for all $i\in \{1,\ldots,n\}$, we will denote the partial derivatives of $F$ by
$$F^{\{i\}}(x_{1},\ldots,x_{n}):=\frac{\partial F}{\partial x_{i}} (x_{1},\ldots,x_{n}) \in \UU[x_{1},\ldots,x_{n}]_{d-1}.$$ 
Finally, we recall that the discriminant of $F$ is defined by the equality (see \S \ref{subsec:disc})
\begin{equation}\label{eq:discressym}
d^{a(n,d)}\Disc(F)=\Res\left(  F^{\{1\}},F^{\{2\}},\ldots,F^{\{n\}} \right) \in \UU
\end{equation}
and that it is homogeneous of degree $n(d-1)^{n-1}$ in $\UU$.

\begin{lem}
The partial derivatives $F^{\{1\}},F^{\{2\}},\ldots,F^{\{n\}}$ of the symmetric polynomial $F(x_{1},\ldots,x_{n})$ form  a $\mathfrak{S}_{n}$-equivariant polynomial system.
\end{lem}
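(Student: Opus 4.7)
The claim is a purely formal consequence of the chain rule applied to a symmetric function, so I expect no serious obstacle. Here is how I would organize the argument.

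The plan is to differentiate the identity expressing the symmetry of $F$. By definition of the symmetric polynomial $F$, for every $\sigma \in \mathfrak{S}_n$ we have
$$ F(x_{\sigma(1)}, x_{\sigma(2)}, \ldots, x_{\sigma(n)}) = F(x_1, x_2, \ldots, x_n), $$
since each basis element $e_\lambda(x)$ is symmetric. I would then apply $\partial/\partial x_i$ to both sides of this equality.

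The right-hand side differentiates directly to $F^{\{i\}}(x_1, \ldots, x_n)$. For the left-hand side, I write $F$ as a polynomial in auxiliary variables $y_1, \ldots, y_n$ and apply the chain rule with the substitution $y_j = x_{\sigma(j)}$:
$$ \frac{\partial}{\partial x_i} F(x_{\sigma(1)}, \ldots, x_{\sigma(n)}) = \sum_{j=1}^{n} \frac{\partial F}{\partial y_j}(x_{\sigma(1)}, \ldots, x_{\sigma(n)}) \cdot \frac{\partial x_{\sigma(j)}}{\partial x_i}. $$
Now $\partial x_{\sigma(j)} / \partial x_i$ vanishes except when $\sigma(j) = i$, that is when $j = \sigma^{-1}(i)$, in which case it equals $1$. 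Therefore the sum collapses to the single term
$$ F^{\{\sigma^{-1}(i)\}}(x_{\sigma(1)}, \ldots, x_{\sigma(n)}) = \sigma\bigl(F^{\{\sigma^{-1}(i)\}}\bigr)(x_1, \ldots, x_n). $$

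Combining both computations yields $\sigma(F^{\{\sigma^{-1}(i)\}}) = F^{\{i\}}$ for every $i \in [n]$ and every $\sigma \in \mathfrak{S}_n$. Replacing $i$ by $\sigma(i)$ (equivalently, substituting $\sigma$ by $\sigma$ applied to the index $\sigma(i)$), this rewrites as $\sigma(F^{\{i\}}) = F^{\{\sigma(i)\}}$, which is exactly the equivariance condition \eqref{eq:globsym}. The only subtle point worth stating explicitly is that the chain-rule computation must be carried out in the universal polynomial ring $\UU[x_1, \ldots, x_n]$, but this is automatic since partial derivatives are defined formally and all manipulations are polynomial identities.
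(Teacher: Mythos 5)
Your argument is correct. You differentiate the invariance identity $F(x_{\sigma(1)},\ldots,x_{\sigma(n)})=F(x_1,\ldots,x_n)$ and use the formal chain rule to see that the $x_i$-derivative of the left-hand side collapses to $\sigma\bigl(F^{\{\sigma^{-1}(i)\}}\bigr)$; re-indexing then gives exactly the equivariance condition \eqref{eq:globsym}. The paper takes a more computational route: it also invokes the chain rule, but applied to the explicit expansion of $F$ in the elementary symmetric polynomials, writing $F^{\{i\}}=\sum_k \frac{\partial e_k}{\partial x_i}S_k$ with the $S_k$ symmetric, together with the closed formula $\frac{\partial e_j}{\partial x_i}=\sum_{r=0}^{j-1}(-1)^r x_i^r e_{j-1-r}$, from which equivariance is read off term by term. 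Your version is cleaner and strictly more general --- it uses only that $F$ is invariant under the permutation action and nothing about the basis chosen to represent it, and the same two lines would prove equivariance of the gradient for any group acting by permuting the variables. What the paper's version buys is the pair of identities \eqref{eq:diffF} and \eqref{eq:diffej}, which are not incidental: they are reused in the proof of Theorem \ref{thm:discmaintheorem} to compute the top divided difference $F^{\{1,\ldots,d\}}=(-1)^{d-1}c_{(d)}$. So the paper is front-loading machinery it needs later, whereas your proof isolates the lemma itself in the most economical way. Your remark that the chain rule here is a formal polynomial identity valid over $\UU$ is the right thing to say and closes the only potential gap.
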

\begin{proof} Since $F$ is a polynomial in the elementary symmetric polynomials, the chain rule formula for the derivation of composed functions shows that
there exist $\min\{d,n\}$ homogeneous symmetric polynomials $S_k(x_{1},\ldots,x_{n})$ such that for all $i=1,\ldots,n$ 
\begin{equation}\label{eq:diffF}
	F^{\{i\}}=\frac{\partial F}{\partial x_i} = \sum_{k=1}^{\min\{d,n\}} \frac{\partial e_{k}}{\partial x_{i}} S_{k}(x_{1},\dots,x_{n}).
\end{equation}
Moreover, for any pair of integers $i,j$ we have
\begin{equation}\label{eq:diffej}
 \frac{\partial e_{j}}{\partial x_i}=\sum_{r=0}^{j-1} (-1)^{r}x_{i}^{r}e_{j-1-r}.
\end{equation}
Therefore, we deduce that for any $\sigma \in \mathfrak{S}_{n}$, we have 
$\sigma	\left(F^{\{i\}} \right)= F^{\{\sigma(i)\}}$ as claimed.
\end{proof}

As a consequence of this lemma, Theorem \ref{thm:maintheorem} can be applied in order to decompose the resultant of the polynomials $F^{\{1\}},F^{\{2\}},\ldots,F^{\{n\}}$ and hence, by \eqref{eq:discressym}, to decompose the discriminant of the symmetric polynomial $F$. We take again the notation of \S \ref{subsec:divdiff} and  \S \ref{subsec:mainthm}.

\begin{thm}\label{thm:discmaintheorem} Assume that $n\geq 2$ and $d\geq 2$. With the above notation, the following equalities hold.

\noindent $\bullet$ If $d> n$ then 
\begin{equation*}
d^{a(n,d)}\Disc\left(F\right)=\prod_{\substack{\lambda\vdash n}}
\Res\left( F_{\lambda}^{\{1\}}, F_{\lambda}^{\{1,2\}}, \ldots, F_{\lambda}^{\{1,2,\ldots,l(\lambda)-1\}}, F_{\lambda}^{\{1,2,\ldots,l(\lambda)\}} 
\right)^{m_{\lambda}}.
\end{equation*}

\noindent $\bullet$ If $d\leq n$ then 
\begin{multline*}
d^{a(n,d)}\Disc\left( F \right)=  \left(F^{\{1,\ldots,d\}}\right)^{m_{0}}\prod_{ \substack{\lambda\vdash n \\ l(\lambda) <d}}
\Res\left( F_{\lambda}^{\{1\}}, F_{\lambda}^{\{1,2\}}, \ldots, F_{\lambda}^{\{1,2,\ldots,l(\lambda)-1\}}, F_{\lambda}^{\{1,2,\ldots,l(\lambda)\}} 
\right)^{m_{\lambda}}
\end{multline*}
where 
\begin{equation*}
 m_{0}:=n(d-1)^{n-1}-\sum_{ \substack{\lambda\vdash n \\ l(\lambda) < d}} m_{\lambda} 
\left(\sum_{j=1}^{l(\lambda)} \frac{(d-1)(d-2)\cdots(d-l(\lambda)) }{(d-j)} \right).
\end{equation*}
Moreover, if $F$ is given by \eqref{eq:F} then $F^{\{1,\ldots,d\}}=(-1)^{{d-1}}c_{(d)}$ so that 
\begin{multline*}
d^{a(n,d)}\Disc\left( F \right)=  (-1)^{\varepsilon}\left(c_{d}\right)^{m_{0}}\prod_{ \substack{\lambda\vdash n \\ l(\lambda) <d}}
\Res\left( F_{\lambda}^{\{1\}}, F_{\lambda}^{\{1,2\}}, \ldots, F_{\lambda}^{\{1,2,\ldots,l(\lambda)-1\}}, F_{\lambda}^{\{1,2,\ldots,l(\lambda)\}} 
\right)^{m_{\lambda}}
\end{multline*}
where $\varepsilon=n-1$ if $d=2$ and $\varepsilon=0$ if $d\geq 3$.
\end{thm}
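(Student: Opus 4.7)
The plan is to deduce Theorem \ref{thm:discmaintheorem} from Theorem \ref{thm:maintheorem} applied to the partial derivative system, combined with the defining relation \eqref{eq:discressym}. By the preceding lemma, $(F^{\{1\}},\ldots,F^{\{n\}})$ is a $\mathfrak{S}_{n}$-equivariant system of homogeneous polynomials of degree $d-1$; Theorem \ref{thm:maintheorem} therefore applies verbatim after replacing the degree parameter ``$d$'' of its statement by $d-1$. Under this substitution, the dichotomy $d\geq n$ versus $d<n$ becomes $d>n$ versus $d\leq n$; the exponent $nd^{n-1}$ becomes $n(d-1)^{n-1}$; the products $d(d-1)\cdots(d-l(\lambda)+1)$ become $(d-1)(d-2)\cdots(d-l(\lambda))$; the range $l(\lambda)\leq d$ becomes $l(\lambda)<d$; and the extra factor $F^{\{1,\ldots,d+1\}}$ is replaced by $F^{\{1,\ldots,d\}}$. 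Dividing through by $d^{a(n,d)}$ then yields the two displayed decompositions, modulo the identification of $F^{\{1,\ldots,d\}}$.

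In the case $d\leq n$ the element $F^{\{1,\ldots,d\}}\in\UU$ is a degree-zero divided difference of polynomials of degree $d-1$; by Remark \ref{rem:deg0} it is independent of the specific subset of size $d$ and hence a well-defined constant. To compute it I would expand $F=\sum_{\lambda\vdash d}c_{\lambda}e_{\lambda}$ in the basis \eqref{elambda} and apply the chain rule to $F$ viewed as a polynomial in $e_{1},\ldots,e_{d}$:
$$F^{\{i\}}=\sum_{p=1}^{d} G_{p}\,\frac{\partial e_{p}}{\partial x_{i}},\qquad G_{p}:=\frac{\partial F}{\partial e_{p}}.$$
Each $G_{p}$ is a symmetric polynomial in $x_{1},\ldots,x_{n}$ and in particular independent of $i$. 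Divided differences are linear in the input family (directly from Lemma \ref{lem:divdiff}), and a polynomial factor independent of $i$ pulls out of them (iteratively from Proposition \ref{prop:divdiff}), so
$$F^{\{1,\ldots,d\}}=\sum_{p=1}^{d} G_{p}\cdot\left(\frac{\partial e_{p}}{\partial x}\right)^{\{1,\ldots,d\}}.$$

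The technical core is then the closed form
$$\left(\frac{\partial e_{p}}{\partial x}\right)^{\{I\}}=(-1)^{|I|-1}\,e_{p-|I|}\bigl(x_{j}: j\notin I\bigr)$$
valid for every subset $I\subset[n]$, with the convention $e_{q}=0$ for $q<0$. This is the step I expect to be the main obstacle and would prove by induction on $|I|$. The base case $|I|=1$ is just $\partial e_{p}/\partial x_{i}=e_{p-1}(x_{j}:j\neq i)$, and the inductive step uses Proposition \ref{prop:divdiff} together with the Pascal-type splitting $e_{r}(x_{j}: j\in S\cup\{a\})=e_{r}(x_{j}: j\in S)+x_{a}\,e_{r-1}(x_{j}: j\in S)$ applied to the two lower-order divided differences that appear. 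Specializing to $I=\{1,\ldots,d\}$, every term with $p<d$ vanishes (its index falls into the negative-degree range), and the term $p=d$ contributes $(-1)^{d-1}e_{0}=(-1)^{d-1}$. Since $G_{d}=c_{(d)}$, I conclude $F^{\{1,\ldots,d\}}=(-1)^{d-1}c_{(d)}$ as claimed.

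The identification of $\varepsilon$ then follows from $(F^{\{1,\ldots,d\}})^{m_{0}}=(-1)^{(d-1)m_{0}}(c_{(d)})^{m_{0}}$. For $d=2$ the only partition of $n$ with $l(\lambda)<2$ is $\lambda=(n)$, and unfolding the definition gives $m_{0}=n-1$, matching $\varepsilon=n-1$. For $d\geq 3$ odd, the factor $d-1$ is even, so $\varepsilon\equiv 0$ automatically. For even $d\geq 4$ the required parity $m_{0}\equiv 0\pmod{2}$ can be verified either term-by-term from the defining sum or, more robustly, by specializing both sides of the formula to an explicit symmetric test polynomial (e.g.~$F=e_{d}$) whose discriminant and divided differences can be computed directly, thereby pinning the sign down once and for all.
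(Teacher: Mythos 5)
Your overall strategy coincides with the paper's: both reduce the theorem to Theorem \ref{thm:maintheorem} applied to the degree-$(d-1)$ equivariant system of partial derivatives, via \eqref{eq:discressym}, with the substitution $d\mapsto d-1$ accounting for all the cosmetic changes in the exponents and ranges. Where you genuinely diverge is in the proof of the key identity $F^{\{1,\ldots,d\}}=(-1)^{d-1}c_{(d)}$. The paper writes $F^{\{i\}}=\sum_{r=0}^{d-1}x_i^r\bigl(\sum_{k=r+1}^{d}(-1)^r e_{k-1-r}S_k\bigr)$, observes that the coefficient of each $x_i^r$ is symmetric (hence row-independent), and kills every term with $r\leq d-2$ by column operations in the defining Vandermonde-type determinant, leaving only $(-1)^{d-1}x_i^{d-1}S_d$. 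You instead prove the closed form $\left(\partial e_{p}/\partial x\right)^{I}=(-1)^{|I|-1}e_{p-|I|}(x_j:j\notin I)$ by induction on $|I|$ using Proposition \ref{prop:divdiff} and the splitting $e_r(S\cup\{a\})=e_r(S)+x_a e_{r-1}(S)$, and then invoke linearity of the divided difference in the input family together with the fact that a row-independent factor $G_p$ pulls out (both of which do follow from multilinearity of the determinant in Lemma \ref{lem:divdiff}, plus the non-zero-divisor property of $V$). I checked your induction; it is correct, and your route has the advantage of producing all divided differences of the derivative system of $e_p$, a reusable identity, rather than just the top one.

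The one genuine soft spot is the determination of $\varepsilon$ for even $d\geq 4$, where you need $m_0\equiv 0\pmod 2$. Your primary suggestion (``verify term-by-term'') is not carried out, and your ``more robust'' fallback of specializing to $F=e_d$ cannot work: for every $d\geq 3$ and $n\geq d$ the point $(1:0:\cdots:0)$ is a singular point of $e_d=0$ (each partial $e_{d-1}(\hat{x}_i)$ is a sum of products of $d-1\geq 2$ distinct coordinates, all of which vanish there), so $\Disc(e_d)=0$ and the specialization reads $0=\pm 0$, fixing nothing. To be fair, the paper is equally terse here, asserting the parity from the formula for $m_0$. A complete argument does exist along your first suggestion: for $d$ even, the inner sum $D_k:=\sum_{j=1}^{k}\prod_{i\in[k]\setminus\{j\}}(d-i)$ is odd exactly when $k\leq 3$, and $(d-1)^{n-1}$ is odd, so $m_0\equiv n-\sum_{l(\lambda)\leq 3}m_\lambda = n-\bigl(S(n,1)+S(n,2)+S(n,3)\bigr)=n-\tfrac{3^{n-1}+1}{2}\pmod 2$, which is always even since $\tfrac{3^{n-1}+1}{2}\equiv n\pmod 2$. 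Supplying this (or any equivalent parity check) closes the gap; everything else in your proposal is sound.
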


\begin{proof} These formulas are obtained by specialization of the formulas given in Theorem \ref{thm:maintheorem} with the difference that the polynomials $F^{\{i\}}$, $i=1,\ldots,n$ are of degree $d-1$ in our setting (and not of degree $d$ as in Theorem \ref{thm:maintheorem}). Thus, the only thing we need to show is that 
\begin{equation}\label{eq:F1ddisc}
 F^{\{1,\ldots,d\}}=(-1)^{{d-1}}c_{(d)}
\end{equation}
under the assumption $n\geq d$, 
where $c_{(d)}$ is the coefficient of $F$ in the writing \eqref{eq:F} that corresponds to the partition $\lambda=(d)$. Indeed, by the above second equality for $m_{0}$ we see that $m_{0}$ is even if $d\geq 3$, whereas the first equality shows that $m_{0}=n-1 \mod 2$ if $d=2$.

To prove \eqref{eq:F1ddisc}, observe that \eqref{eq:diffF} and \eqref{eq:diffej} show that there exist symmetric homogeneous polynomials $S_{k}(x_{1},\ldots,x_{n})$, $k=1,\ldots,d$ of degree $d-k$ respectively, such that 
for all $i=1,\ldots,n$
\begin{equation}\label{eq:combFidisc}
F^{\{i\}}= \sum_{k=1}^{d} \frac{\partial e_{k}}{\partial x_{i}} S_{k}(x_{1},\dots,x_{n})=
\sum_{k=1}^{d}\sum_{r=0}^{k-1}(-1)^{r}x_{i}^{r} e_{k-1-r}S_{k}=\sum_{r=0}^{d-1}x_{i}^{r} \left( \sum_{k=r+1}^{d}(-1)^{r} e_{k-1-r}S_{k} \right). 
\end{equation}
Now, by the defining equality of divided differences given in Lemma \ref{lem:divdiff}, we have 
\begin{equation*}
V(x_{1},x_{2},\ldots,x_{d})\cdot F^{\{ 1,\ldots,d\}}=
\left|
\begin{array}{ccccc}
 1 & x_{{1}} & \cdots & x_{{1}}^{{d-2}} & F^{\{1\}}\\
 1 & x_{{2}} & \cdots & x_{2}^{{d-2}} & F^{\{2\}}\\
 \vdots & \vdots & & \vdots & \vdots \\
  1 & x_{{d}} & \cdots & x_{{d}}^{{d-2}} & F^{\{n\}} \\
\end{array}
\right|. 
\end{equation*}
Therefore, using \eqref{eq:combFidisc} one can reduce, by elementary operations on columns, the last column of the above determinant to terms corresponding to the indexes $k=d,r=d-1$, that is to say 
$$
\left|
\begin{array}{ccccc}
 1 & x_{{1}} & \cdots & x_{{1}}^{{d-2}} & F^{\{1\}}\\
 1 & x_{{2}} & \cdots & x_{2}^{{d-2}} & F^{\{2\}}\\
 \vdots & \vdots & & \vdots & \vdots \\
  1 & x_{{d}} & \cdots & x_{{d}}^{{d-2}} & F^{\{n\}} \\
\end{array}
\right|=
\left|
\begin{array}{ccccc}
 1 & x_{{1}} & \cdots & x_{{1}}^{{d-2}} & (-1)^{d-1}x_{1}^{d-1}S_{d}\\
 1 & x_{{2}} & \cdots & x_{2}^{{d-2}} & (-1)^{d-1}x_{2}^{d-1}S_{d}\\
 \vdots & \vdots & & \vdots & \vdots \\
  1 & x_{{d}} & \cdots & x_{{d}}^{{d-2}} &  (-1)^{d-1}x_{1}^{d-1}S_{d}
\end{array}
\right|.$$
It follows that $F^{\{1,\ldots,d\}}=(-1)^{{d-1}}S_{d}$.  Finally, from the definition \eqref{eq:F} of $F$, we have $S_{d}=c_{(d)}$ and the proof is completed.
\end{proof}

We emphasize that the formulas given in this theorem are universal with respect to the coefficients of $F$ and are independent of the choice of basis that is used to represent $F$ (for the sake of generality, we have chosen the basis  \eqref{elambda} as an illustration).  We also mention that formulas similar to the ones given in \S \ref{subset:averaging} can also be written explicitly for the discriminant of $F$  (this is actually the point of view that has been used in \cite{PeSh09}). 

\medskip
 
Hereafter, we give two examples corresponding to low degree polynomials, namely the cases  $d=2$ and $d=3$. In these two cases the number of variables $n$ is large compared to $d$ and the formulas given in Theorem \ref{thm:discmaintheorem} are hence computationally very interesting since a resultant computation in $n$ variables is replaced by several resultant computations in at most $d$ variables. 

\medskip

\paragraph{\bf Case $\mathbf{n\geq d=2}$} The generic homogeneous polynomial of degree 2 can be written as 
$$F=c_{(2)}e_{2}+c_{(1,1)}e_{1}^{2}.$$
Its derivatives are 
$$F^{\{i\}}=c_{(2)}\frac{\partial e_{2}}{\partial x_{1}}+2c_{(1,1)}e_{1}\frac{\partial e_{1}}{\partial x_{1}}=c_{(2)}(e_{1}-x_{1})+2c_{(1,1)}e_{1}$$
and hence we deduce that
$$\Res\left(F_{(2)}^{\{1\}}\right)=(n-1)c_{(2)}+2nc_{(1,1)}.$$
Observe that this polynomial is not irreducible over $\ZZ[c_{(2)},c_{(1,1)}]$ if $n$ is odd since it is divisible by 2. It is also not hard to check that $m_{(2)}=1$ and $m_{0}=n-1$ here. Finally, since $a(n,2)=0$ if $n$ is even and $a(n,2)=1$ if $n$ is odd, we get 
$$ \Disc(F)=
\begin{cases}
 -c_{(2)}^{n-1}\left((n-1)c_{(2)}+2nc_{(1,1)} \right) & \textrm{ if } n \textrm{ is even},\\
c_{(2)}^{n-1}\left( \frac{n-1}{2}c_{(2)}+nc_{(1,1)} \right) &  \textrm{ if } n \textrm{ is odd }.
\end{cases}
$$

\paragraph{\bf Case $\mathbf{n\geq d=3}$} Consider the  generic homogeneous polynomial of degree 3
$$F=c_{(3)}e_{3}+c_{(2,1)}e_{2}e_{1}+c_{(1,1,1)}e_{1}^{3}.$$
The formula given in Theorem \ref{thm:discmaintheorem} shows that
$$ 3^{\frac{2^{n}-(-1)^{n}}{3}}\Disc(F)=c_{(3)}^{m_{0}}\Res\left(F_{(n)}^{\{1\}}\right) \prod_{k=1}^{\lfloor \frac{n}{2} \rfloor} 
\Res\left(F_{(n-k,k)}^{\{1\}},F_{(n-k,k)}^{\{1,2\}}\right)^{m_{(n-k,k)}}$$
where all the factors can be described explicitly. To begin with, from \eqref{eq:diffF} and \eqref{eq:diffej} we get that for all $i=1,\ldots,n$
$$F^{\{i\}}=c_{(3)}\left( e_{2}-x_{i}e_{1}+x_{i}^{2}   \right) +c_{(2,1)}\left( e_{2}+e_{1}(e_{1}-x_{i}) \right)+3c_{(1,1,1)}e_{1}^{2}.$$
It follows immediately that
$$\Res\left(F_{(n)}^{\{1\}}\right)=\binom{n-1}{2}c_{(3)}+3\binom{n}{2}c_{(2,1)}+3n^{2}c_{(1,1,1)}.$$
Now, let $(n-k,k)$ be a partition of length 2 of $n$. A straightforward computation shows that for any pair of distinct integers $i,j$ we have
$$F^{\{i,j\}}=c_{(3)}\left( x_{i}+x_{j}-e_{1} \right)-c_{(2,1)}e_{1}$$
and we deduce, by means of a single (Sylvester) resultant computation that
\begin{multline*}
 \Res\left(F_{(n-k,k)}^{\{1\}},F_{(n-k,k)}^{\{1,2\}}\right)= c_{(3)}^{2}\left(  \binom{n-1}{2}c_{(3)}+3\binom{n}{2}c_{(2,1)}+3n^{2}c_{(1,1,1)}  \right)  \\
-\frac{1}{2}k(n-k)\left( 
 \left( n-2 \right) c_{{(3)}}^{3}+ \left( 24c_{{(1,1,1)}}+3nc_{{(2,1)}}
 \right) c_{{(3)}}^{2}+ \left( 3n-6 \right) c_{{(2,1)}}^{2}c_{{(3)}}+nc_{(2,1)}^{3}
    \right).
\end{multline*}
The multiplicity $m_{(n-k,k)}$ are equal to the binomial $\binom{n}{k}$ for all $k=1,\ldots,\lfloor \frac{n}{2}\rfloor$ except if $n$ is even and $k=\frac{n}{2}$ in which case $m_{(\frac{1}{2},\frac{1}{2})}=\frac{1}{2}\binom{n}{\frac{n}{2}}$. Finally, it remains to determine the integer $m_{0}$. We have
$$m_{0}=n2^{n-1}-m_{(n)}-3\sum_{\substack{\lambda\vdash n \\ l(\lambda)=2}} m_{\lambda}=n2^{n-1}-1-3\sum_{k=1}^{\lfloor \frac{n}{2}\rfloor} m_{(n-k,k)}.$$
But since
$$2\sum_{k=1}^{\lfloor \frac{n}{2}\rfloor} m_{(n-k,k)}=\sum_{k=1}^{n-1}\binom{n}{k}=2^{n}-2=2(2^{n-1}-1),$$
we finally deduce that $$m_{0}=(n-3)2^{n-1}+2.$$ To illustrate this general formula, we detail the two particular cases $n=3$ and $n=4$. If $n=3$, we obtain
$$  \Disc(F)={c_{{(3)}}}^{2} \left( c_{{(3)}}+9\,c_{{(2,1)}}+27\,c_{{(1,1,1)}} \right) 
 \left( -{c_{{(2,1)}}}^{2}c_{{(3)}}-{c_{{(2,1)}}}^{3}+c_{{(1,1,1)}}{c_{{(3)}}}^{2}
 \right) ^{3}$$
 where
 $$\Res\left(F_{(3)}^{\{1\}}\right)=\left( c_{{(3)}}+9\,c_{{(2,1)}}+27\,c_{{(1,1,1)}} \right), \ m_{{(3)}}=1$$
 and
$$\Res\left(F_{(2,1)}^{\{1\}},F_{(2,1)}^{\{1,2\}}\right)=3\left( -{c_{{(2,1)}}}^{2}c_{{(3)}}-{c_{{(2,1)}}}^{3}+c_{{(1,1,1)}}{c_{{(3)}}}^{2}
 \right), \ m_{(2,1)}=3.$$
If $n=4$ we get
\begin{multline}\label{eq:discn=4}
 \Disc(F)=-{c_{{(3)}}}^{10} \left( c_{{(3)}}+2\,c_{{(2,1)}} \right) ^{9} \left( 6
\,c_{{(2,1)}}+16\,c_{{(1,1,1)}}+c_{{(3)}} \right) \times \\ \left( 4\,c_{{(1,1,1)}}{c_{{(3)}}}
^{2}-3\,{c_{{(2,1)}}}^{2}c_{{(3)}}-2\,{c_{{(2,1)}}}^{3} \right) ^{4}
\end{multline}
where
$$\Res\left(F_{(4)}^{\{1\}}\right)=3\left( 6
\,c_{{(2,1)}}+16\,c_{{(1,1,1)}}+c_{{(3)}} \right),  \ m_{(4)}=1,$$
$$\Res\left(F_{(3,1)}^{\{1\}},F_{(3,1)}^{\{1,2\}}\right)= 3 \left( 4\,c_{{(1,1,1)}}{c_{{(3)}}}
^{2}-3\,{c_{{(2,1)}}}^{2}c_{{(3)}}-2\,{c_{{(2,1)}}}^{3} \right), \ m_{(3,1)}=4$$
and
\begin{equation}\label{eq:notirredres}
 \Res\left(F_{(2,2)}^{\{1\}},F_{(2,2)}^{\{1,2\}}\right)= -\left( c_{{(3)}}+2\,c_{{(2,1)}} \right) ^{3}, \  m_{(2,2)}=3.
\end{equation}
For instance, for the particular example of the Clebsch surface which is given by the equation
$$h(x_{1},x_{2},x_{3},x_{4})=x_{1}^{3}+x_{2}^{3}+x_{3}^{3}+x_{4}^{3}-(x_{1}+x_{2}+x_{3}+x_{4})^{3}=3e_{3}-3e_{2}e_{1}=0,$$
we recover the known fact that $h/3$ defines a smooth cubic in every characteristic except 5 (see \cite[\S 5.4]{Saito}) since \eqref{eq:discn=4} shows that 
$$\Disc(h/3)=\Disc(e_{3}-e_{2}e_{1})=-(-1)^{9}(-6+1)(-3+2)^{4}=-5.$$

\begin{rem} Contrary to what was expected in \cite{PeSh09}, the resultant factors appearing in Theroem \ref{thm:discmaintheorem}  are not always irreducible (see e.g.~\eqref{eq:notirredres}). However, we ignore if these resultant factors are geometrically irreducible (i.e.~are irreducible polynomials up to a certain power) when the ground ring is assumed to be field, but this was the case in all the experiments that we have done. As an illustration, we notice that the factor \eqref{eq:m=n/2} appearing in Example \ref{ex:d=2} is not geometrically irreducible, but it becomes geometrically irreducible (over a field) when specialized to get the discriminant formula in the case $n\geq d=3$. Indeed, comparing the notation in these two examples we get
$d=-b=c_{(3)}+c_{(2,1)}.$
\end{rem}

\paragraph{\small{\textsc{Acknowledgments}.}}

The authors are grateful to Evelyne Hubert for useful discussions on equivariant polynomial systems. 
The second author's research has
received funding from the European Union (European Social Fund) 
and Greek national funds through the Operational Program 
``Education and Lifelong Learning" of the National Strategic 
Reference Framework, Research Funding Program ``ARISTEIA",
Project ESPRESSO: Exploiting Structure in Polynomial Equation and System
Solving with Applications in Geometric and Game Modeling.
She also acknowledges the Galaad project team at INRIA Sophia-Antipolis
that made possible her visit to INRIA.


\begin{thebibliography}{10}

\bibitem{ApJo}
Fran\c{c}ois Ap\'ery and Jean-Pierre Jouanolou.
\newblock {\em {\'E}limination: le cas d'une variable}.
\newblock Hermann, Collection M\'ethodes, 2006.

\bibitem{BuJo12}
Laurent Bus{\'e} and Jean-Pierre Jouanolou.
\newblock On the {D}iscriminant {S}cheme of {H}omogeneous {P}olynomials.
\newblock {\em Math. Comput. Sci.}, 8(2):175--234, 2014.

\bibitem{CLO05}
David~A. Cox, John Little, and Donal O'Shea.
\newblock {\em Using algebraic geometry}, volume 185 of {\em Graduate Texts in
  Mathematics}.
\newblock Springer, New York, second edition, 2005.

\bibitem{Dem12}
Michel Demazure.
\newblock R\'esultant, discriminant.
\newblock {\em Enseign. Math. (2)}, 58(3-4):333--373, 2012.

\bibitem{DiCa71}
Jean~A. Dieudonn{\'e} and James~B. Carrell.
\newblock {\em Invariant theory, old and new}.
\newblock Academic Press, New York-London, 1971.

\bibitem{FS12}
Jean-Charles Faug{\`e}re and Jules Svartz.
\newblock {Solving Polynomial Systems Globally Invariant Under an Action of the
  Symmetric Group and Application to the Equilibria of N vortices in the
  Plane}.
\newblock In {\em Proceedings of the 37th International Symposium on Symbolic
  and Algebraic Computation}, ISSAC '12, pages 170--178, New York, NY, USA,
  2012. ACM.

\bibitem{GKZ94}
I.~M. Gelfand, M.~M. Kapranov, and A.~V. Zelevinsky.
\newblock {\em Discriminants, resultants and multidimensional determinants}.
\newblock Modern Birkh\"auser Classics. Birkh\"auser Boston, Inc., Boston, MA,
  2008.
\newblock Reprint of the 1994 edition.

\bibitem{Jou91}
Jean-Pierre Jouanolou.
\newblock Le formalisme du r\'esultant.
\newblock {\em Adv. Math.}, 90(2):117--263, 1991.

\bibitem{Jou97}
Jean-Pierre Jouanolou.
\newblock Formes d'inertie et r\'esultant: un formulaire.
\newblock {\em Adv. Math.}, 126(2):119--250, 1997.

\bibitem{Mac02}
F.S. Macaulay.
\newblock Some formulae in elimination.
\newblock {\em Proc.\ London Math.\ Soc.}, 1(33):3--27, 1902.

\bibitem{MacDonald}
I.~G. Macdonald.
\newblock {\em Symmetric functions and {H}all polynomials}.
\newblock Oxford Mathematical Monographs. The Clarendon Press, Oxford
  University Press, New York, second edition, 1995.
\newblock With contributions by A. Zelevinsky, Oxford Science Publications.

\bibitem{Nie12}
Jiawang Nie.
\newblock Discriminants and nonnegative polynomials.
\newblock {\em J. Symbolic Comput.}, 47(2):167--191, 2012.

\bibitem{PeSh09}
N.~Perminov and S.~Shakirov.
\newblock Preprint arxiv:0910.5757v1.
\newblock Discriminants of Symmetric Polynomials, 2009.

\bibitem{Saito}
Takeshi Saito.
\newblock The discriminant and the determinant of a hypersurface of even
  dimension.
\newblock {\em Math. Res. Lett.}, 19(4):855--871, 2012.

\bibitem{Wor94}
Patrick~A. Worfolk.
\newblock Zeros of equivariant vector fields: algorithms for an invariant
  approach.
\newblock {\em J. Symbolic Comput.}, 17(6):487--511, 1994.

\end{thebibliography}

\end{document}